\documentclass[10pt]{amsart}

\usepackage{amssymb, amsmath, amsfonts}
\usepackage{amscd}
\usepackage{verbatim}
\usepackage{color}
\usepackage[color,all]{xy}
\usepackage[all]{xy}


\numberwithin{equation}{section}

\newcommand{\pp}{\mathbb P}
\newcommand{\cc}{\mathbb C}
\newcommand{\cW}{\mathcal W}
\newcommand{\cQ}{\mathcal Q}
\newcommand{\cT}{\mathcal T}
\newcommand{\cP}{\mathcal P}
\newcommand{\Sym}{\mathrm{Sym}}
\newcommand{\rank}{\mathrm{rk}\,}
\newcommand{\Hom}{\mathrm{Hom}}
\newcommand{\Ker}{\mathrm{Ker}}
\newcommand{\Gr}{\mathrm{Gr}}

\newcommand{\Prin}{\mathrm{Prin}}
\newcommand{\cF}{\mathcal{F}}
\newcommand{\tE}{\tilde{E}}
\newcommand{\Elm}{\mathrm{Elm}}
\newcommand{\LQ}{LQ_{-e}(W)}
\newcommand{\LQo}{LQ_{-e}(W)^\circ}
\newcommand{\wj}{{\widetilde{j}}}

\newcommand{\cO}{\mathcal{O}}
\newcommand{\Kc}{K_{C}}
\newcommand{\Quot}{\mathrm{Quot}}

\newcommand{\bE}{\overline{E}}
\newcommand{\Indet}{\mathrm{Indet}}


\newcommand{\Aut}{\mathrm{Aut}\,}
\newcommand{\Image}{\mathrm{Im}\,}
\newcommand{\ev}{\mathrm{ev}}


\newcommand{\Oc}{{\mathcal O}_{C}}
\newcommand{\sRat}{\underline{\mathrm{Rat}}\,}
\newcommand{\Rat}{\mathrm{Rat}\,}
\newcommand{\sPrin}{\mathrm{\underline{Prin}}\,}


\newcommand{\LG}{\mathrm{LG}}
\newcommand{\Supp}{\mathrm{Supp}}
\newcommand{\qf}{Q_F^e}
\newcommand{\qfc}{(Q_F^e )^\circ}
\newcommand{\be}{\bar{e}}
\newcommand{\cE}{{\mathcal E}}

\newcommand{\isom}{\xrightarrow{\sim}}

\newcommand{\Cd}{C^{(d)}}
\newcommand{\tY}{\widehat{Y}}
\newcommand{\wcE}{\widetilde{\cE}}

\usepackage{hyperref}
\usepackage{color}
\usepackage[color,all]{xy}
\hypersetup{colorlinks, citecolor=red, linkcolor=blue}

\newtheorem{theorem}{{\textbf Theorem}}[section]
\newtheorem{prop}[theorem]{{\textbf Proposition}}
\newtheorem{cor}[theorem]{{\textbf Corollary}}
\newtheorem{lemma}[theorem]{{\textbf Lemma}}

\newtheorem{defn}[theorem]{{\textbf Definition}}

\newtheorem{rmk}[theorem]{{\textbf Remark}}
\newenvironment{remark}{\begin{rmk}\rm}{\end{rmk}}
\newenvironment{definition}{\begin{defn}\rm}{\end{defn}}

\title[Irreducibility of Lagrangian Quot schemes]{Irreducibility of Lagrangian Quot schemes \\over an algebraic curve}

\author{Daewoong Cheong}
\address{Chungbuk National University, Department of Mathematics, Chungdae-ro 1, Seowon-Gu, Cheongju City, Chungbuk 28644, Korea}
\email{daewoongc@chungbuk.ac.kr}

\author{Insong Choe}
\address{Department of Mathematics, Konkuk University, 1 Hwayang-dong, Gwangjin-Gu, Seoul 143-701, Korea}
\email{ischoe@konkuk.ac.kr}

\author{George H.\ Hitching}
\address{Oslo Metropolitan University, Postboks 4, St. Olavs plass, 0130 Oslo, Norway}
\email{gehahi@oslomet.no}

\linespread{1.2}

\begin{document}

\begin{abstract} Let $C$ be a complex projective smooth curve and $W$ a symplectic vector bundle of rank $2n$ over $C$. The Lagrangian Quot scheme $\LQ$ parameterizes subsheaves of rank $n$ and degree $-e$ which are isotropic with respect to the symplectic form. We prove that $\LQ$ is irreducible and generically smooth of the expected dimension for all large $e$, and that a generic element is saturated and stable. \end{abstract}

\maketitle

\section{Introduction}

Let $C$ be a smooth algebraic curve of genus $g \ge 0$ over $\cc$. A vector bundle $W$ over $C$ is called \textsl{symplectic} if there exists a nondegenerate skew-symmetric bilinear form $\omega \colon W \otimes W \to L$ for some line bundle $L$. Such an $\omega$ is called an $L$-valued symplectic form. A subsheaf $E$ of $W$ is called \textsl{isotropic} if $\omega|_{E \otimes E} = 0$. By linear algebra, a symplectic bundle has even rank $2n$ and any isotropic subsheaf has rank at most $n$. An isotropic subbundle (resp., subsheaf) of rank $n$ is called a \textsl{Lagrangian subbundle} (resp., \textsl{Lagrangian subsheaf}). For information on semistability and moduli of symplectic bundles, see \cite{BG}.

For vector bundles,  Popa and Roth proved the following result on the irreducibility of Quot schemes.

\begin{theorem} \label{PRirr} (\cite[Theorem 6.4]{PR}) \ 
For any vector bundle $V$ over $C$, there is an integer $d(V, k)$ such that for all $d \ge d(V,k)$, the Quot scheme $Q^{k,d}(V)$ of quotient sheaves of $V$ of rank $k$ and degree $d$ is irreducible.
\end{theorem}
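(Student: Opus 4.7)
The plan is to stratify $Q^{k,d}(V)$ by the torsion length of the quotient sheaf, show that the saturated locus (where $Q$ is locally free) is open, irreducible, and of the expected dimension for $d \gg 0$, and verify that the torsion strata have strictly smaller dimension. Write $r = \rank V$. Since $C$ is a smooth curve, any coherent sheaf on $C$ splits as torsion plus locally free; a quotient $[V \to Q]$ of rank $k$ and degree $d$ thus decomposes as $Q = Q^{\mathrm{lf}} \oplus T$, with $T$ torsion of length $\ell$ and $Q^{\mathrm{lf}}$ locally free of degree $d - \ell$. Writing $E' = \ker(V \to Q)$ and $E = \ker(V \to Q^{\mathrm{lf}})$, we see that $E$ is a subbundle of $V$ and $E' \subset E$ has colength $\ell$, so the stratum $Q^{k,d}(V)_\ell$ fibers over $Q^{k, d-\ell}(V)_0$ with fibers isomorphic to the length-$\ell$ torsion Quot scheme $\mathrm{Quot}^{0, \ell}(E)$, which has dimension $(r-k)\ell$.

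Next I would handle the saturated stratum $Q^{k,d}(V)_0$. At a point with kernel $E$ and quotient $Q$, the tangent space is $\Hom(E, Q)$ and obstructions lie in $H^1(E^* \otimes Q)$; the expected dimension is $\chi_d := (r-k)k(1-g) + rd - k \deg V$, which grows linearly in $d$. For $d$ sufficiently large, $E^* \otimes Q$ has slope above $2g-2$, so $H^1$ vanishes uniformly, and the stratum is smooth of dimension $\chi_d$. The stratification from the previous paragraph then gives $\dim Q^{k,d}(V)_\ell \le \chi_{d-\ell} + (r-k)\ell = \chi_d - k\ell$, so for $\ell \ge 1$ the torsion strata have strictly smaller dimension, making $Q^{k,d}(V)_0$ open and dense in $Q^{k,d}(V)$.

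Finally, I would prove irreducibility of the saturated stratum via the forgetful morphism $\pi \colon Q^{k,d}(V)_0 \to \M^{\mathrm{stk}}_{k, d}(C)$ to the moduli stack of rank-$k$ degree-$d$ locally free sheaves on $C$, which is known to be irreducible of dimension $k^2(g-1)$. The fiber of $\pi$ over $[Q]$ is the open subscheme of $\Hom(V, Q)$ consisting of surjections, modulo $\Aut(Q)$; this is irreducible as an open subset of an affine space modulo a connected group. For $d$ large, $H^1(V^* \otimes Q) = 0$, so $\dim \Hom(V,Q)$ is constant in $Q$, and the surjection locus is nonempty. Hence $Q^{k,d}(V)_0$ is irreducible, and together with the dimension count, so is $Q^{k,d}(V)$. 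The main obstacle is obtaining a uniform bound $d(V, k)$: since $\M^{\mathrm{stk}}_{k,d}$ contains arbitrarily unstable bundles, one must control in advance the slopes of the Harder--Narasimhan summands of candidate $Q$'s to guarantee both the vanishing of $H^1(V^* \otimes Q)$ and the existence of a surjection for every isomorphism class of quotient.
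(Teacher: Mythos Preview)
First, note that this theorem is quoted from Popa--Roth \cite{PR} and not proved in the present paper, so there is no proof here to compare against. The paper does describe Popa--Roth's approach at the start of {\S}\ref{irred}: for a fixed \emph{stable} bundle $E$, sheaf injections $E \hookrightarrow V$ form an open subset of the linear space $H^0(C,\Hom(E,V))$, and one assembles these into an irreducible family of stable subsheaves dominating the Quot scheme (cf.\ \cite[Proposition 6.1]{PR}). The paper also remarks that its own extension-theoretic argument for the Lagrangian case can be adapted to reprove this result.

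Your outline has a concrete gap beyond the uniformity issue you acknowledge at the end. You assert that for $d \gg 0$ the bundle $E^* \otimes Q$ has slope above $2g-2$ and hence $H^1$ vanishes at every saturated point, making $Q^{k,d}(V)_0$ smooth of dimension $\chi_d$. High average slope does not force $H^1=0$, and in fact no such uniform vanishing holds. For instance, with $g \ge 1$ and $V = \oc^{\,2} \oplus M^{2}$, for every large $d$ one can take $E = \oc \oplus L$ with $L \subset M^2$ a line subbundle of degree $2\deg M - d$; then $Q \cong \oc \oplus (M^2/L)$, so $E^* \otimes Q$ has $\oc$ as a direct summand and $h^1(E^*\otimes Q) \ge g$ regardless of $d$. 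Hence the saturated stratum is never everywhere smooth of the expected dimension, and your upper bound $\dim Q^{k,d}(V)_\ell \le \chi_{d-\ell} + (r-k)\ell$ loses its justification. The underlying reason is that any quotient bundle of $Q$ is also one of $V$, so $\mu_{\min}(Q)$ is bounded below by a constant depending only on $V$ and does not grow with $d$; this is exactly why one must pass to the \emph{stable} locus, as Popa--Roth do, rather than fiber over the full moduli stack of rank-$k$ bundles.
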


As a corollary, they showed that for sufficiently large $d$, the Quot scheme $Q^{k,d}(V)$ is generically smooth of the expected dimension, and a general point of $Q^{k,d}(V)$ corresponds to an extension $0 \to E \to V \to V/E \to 0$ where $E$ and $V/E$ are stable vector bundles. A significant feature of this theorem is that it holds for an arbitrary bundle $V$, with no assumption of generality or semistability.

The main goal of the present paper is to obtain the analogous result for Lagrangian Quot schemes of symplectic bundles (Theorem \ref{thmA}). One may try to argue as in \cite{PR}, but one vital step does not appear to adapt in an obvious way: Given a symplectic bundle $W$ of rank $2n$ and for a fixed vector bundle $E$ of rank $n$, the space parameterizing Lagrangian subsheaves $E \subset W$ is a locally closed subset of $\pp H^0(C, \Hom(E, W))$, whose irreducibility seems difficult to decide. This is discussed further at the beginning of {\S} \ref{irred}.

We take instead a different approach: We exploit the geometry of symplectic extensions, together with deformation arguments, as developed in \cite{CH2} and \cite{Hit1}. In particular, Proposition \ref{QFcDense} uses a geometric interpretation for the statement that a nonsaturated Lagrangian subsheaf can be deformed to a subbundle. The connection between extensions and geometry is provided by principal parts, and is developed in {\S} \ref{SymplecticExts}. This provides an alternative language to \v{C}ech cohomology for bundle extensions over curves, and makes transparent the link between the geometric and cohomological properties of the extensions.

We remark that the same argument applies to the vector bundle case, and we expect that similar results can be obtained by these methods for other principal bundles.

We expect that the main result in this paper can be applied to solve the problem on counting maximal Lagrangian subbundles of symplectic bundles, as Holla \cite{Ho} used the irreducibility of Quot schemes to count maximal subbundles of vector bundles. 
Also we expect that an effective version of the irreducibility result for semistable bundles would yield an effective  base freeness (or very ampleness) result on the generalized theta divisors on the moduli of symplectic bundles, as in \cite[{\S} 8]{PR} for vector bundles.
We note that Theorem \ref{thmA} does not give an effective bound on $e$ but only the existence of a bound, due to the existence statement in Proposition \ref{evSurj}. It would be nice to have an effective and reasonably small uniform bound for semistable symplectic bundles.

\subsection*{Acknowledgements}
Our very sincere thanks to the referee of an earlier version for an extremely careful reading, and for detailed and informative feedback which greatly improved the paper, as well as our own understanding.

The first and second authors were supported by Basic Science Research Programs through the National Research Foundation of Korea (NRF) funded by the Ministry of Education (NRF-2016R1A6A3A11930321 and NRF-2017R1D1A1B03034277 respectively). The third author sincerely thanks Konkuk University, Hanyang University and the Korea Institute for Advanced Study for financial support and hospitality.

\subsection*{Notation} Throughout, $C$ denotes a complex projective smooth curve of genus $g \ge 0$. If $W$ is a vector bundle over $C$ and $E \subset W$ a locally free subsheaf, we denote by $\bE$ the saturation, which is a vector subbundle of $W$.

\section{Lagrangian Quot schemes}

In this section, we define the Lagrangian Quot scheme of a symplectic bundle and study its tangent spaces. 

Given a vector bundle $V$ over $C$, the Quot scheme $Q^{k,d}(V)$  parameterizes quotient sheaves of $V$ of rank $k$ and degree $d$; alternatively, subsheaves of $V$ of rank $\rank V - k$ and degree $\deg V - d$. Let $W$ be a bundle of rank $2n$ which carries an $L$-valued symplectic form, where $\deg L = \ell$. Then from the induced isomorphism $W \cong W^* \otimes L$, we have $\deg W = n\ell$. We denote by $\LQ$ the sublocus of $Q^{n, e + n \ell} (W)$ consisting of Lagrangian subsheaves of degree $-e$ and call it a \textsl{Lagrangian Quot scheme}. 

\begin{remark} \label{uniqueness} Note that $\LQ \hookrightarrow Q^{n, e + n\ell}(W)$ depends on the choice of symplectic form $\omega$. However, by \cite[Remarque, p.\ 130]{G}, if $\omega$ and $\omega'$ are two symplectic forms on $W$ then there exists a bundle automorphism $\iota$ of $W$ such that $\iota^* \omega' = \omega$. Then $F \mapsto \iota(F)$ induces an isomorphism $LQ_{-e} (W, \omega) \isom LQ_{-e} (W, \omega')$. In view of this, we shall abuse notation and write simply $\LQ$. \end{remark}

We recall some other important notions: For each integer $e$ and each $x \in C$ we have the \textsl{evaluation map} $\ev_x^e \colon Q^{n, e + n\ell} (W) \dashrightarrow \Gr ( n, W|_x )$ which sends a subsheaf $E$ to the fiber $E|_x$, when this is defined. Also, let $\LG(W)$ be the \textsl{Lagrangian Grassmannian bundle} of $W$, that is, the subfibration of $\Gr(n, W)$ whose fiber at $x \in C$ is the Lagrangian Grassmannian $\LG( W|_x )$.

\begin{lemma} \label{LQproperties} \ Let $W$ be an $L$-valued symplectic bundle of rank $2n$ as above. 
If $g \ge 2$ and $e \ge  \frac{n(g - 1 - \ell)}{2} $, then the locus $\LQ$ is  a nonempty  
closed subset of $Q^{n, e + n\ell} (W)$.
 \end{lemma}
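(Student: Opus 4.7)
The lemma has two parts: closedness of $\LQ$ inside $Q^{n,e+n\ell}(W)$, and nonemptyness in the stated range of $e$.

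For closedness, the plan is to exhibit $\LQ$ as the vanishing locus of a morphism of coherent sheaves. On the product $Q^{n,e+n\ell}(W) \times C$, the universal subsheaf $\mathcal{E} \hookrightarrow W_{Q \times C}$ composes with the pullback of $\omega$ to yield an $\mathcal{O}$-linear map $\wedge^2 \mathcal{E} \to L$. A point $q \in Q^{n,e+n\ell}(W)$ parametrizes a Lagrangian subsheaf of $W$ precisely when this map vanishes identically on the fiber $\{q\} \times C$. Since fiberwise vanishing of a morphism of coherent sheaves is a closed condition on the base, $\LQ$ is a closed subscheme.

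For nonemptyness, the plan is to first produce a single Lagrangian subbundle $E_0 \subset W$ of degree $d_0 \geq \lfloor \frac{n(\ell+1-g)}{2} \rfloor$, and then reach every $e$ in the stated range by performing $e + d_0$ successive elementary transformations on $E_0$ at generic points of $C$. The latter step works because any subsheaf of an isotropic subsheaf is again isotropic; each elementary transformation decreases the degree by $1$ while preserving the rank and the isotropy condition. For the existence of $E_0$, one may argue by induction on $n$. When $n=1$, every line subbundle of a rank-$2$ symplectic bundle is automatically Lagrangian, and Nagata's theorem supplies one of degree $\geq \lceil \frac{\ell - g}{2}\rceil$. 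For the inductive step, choose a line subbundle $M \subset W$ of maximal degree; the quotient $M^\perp/M$ inherits an $L$-valued symplectic structure of rank $2(n-1)$ and degree $(n-1)\ell$, and the inductive hypothesis produces a Lagrangian $\bar E \subset M^\perp/M$ of the required degree. Its preimage under $M^\perp \to M^\perp/M$ is then a Lagrangian $E_0 \subset W$ with $\deg E_0 = \deg M + \deg \bar E$ in the correct range. Alternatively, a sharp bound on maximal Lagrangian subbundles from the symplectic literature (cf.\ \cite{Hit1}) may be quoted directly.

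The main obstacle is establishing the existence of $E_0$ with the required degree: the generic Lange--Narasimhan bound on rank-$n$ subbundles of a rank-$2n$ vector bundle is too weak for the target $\frac{n(\ell+1-g)}{2}$, so the argument must genuinely exploit the symplectic structure. Tracking parity adjustments through the induction---so that the ceilings and floors combine to give the claimed bound---is a somewhat delicate but routine calculation, and the hypothesis $g \geq 2$ enters precisely to place Nagata's bound at the base in the appropriate regime.
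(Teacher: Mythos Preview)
For nonemptiness your strategy matches the paper's: exhibit one Lagrangian subbundle of degree at least $\lfloor \tfrac{n(\ell+1-g)}{2}\rfloor$ and then descend by elementary transformations. The paper simply cites \cite[Theorem 1.4 and Remark 3.6]{CH2} for the existence step. Your inductive reduction via $M^\perp/M$ is natural but does not reach the sharp bound, and the obstacle is not parity bookkeeping but the degree of $M$. A line subbundle of a rank-$2n$ bundle of slope $\ell/2$ is only guaranteed (by Riemann--Roch, or even by Mukai--Sakai) to have degree about $\tfrac{\ell}{2}-g$, whereas closing the induction requires $\deg M \approx \tfrac{\ell+1-g}{2}$ at each step; the accumulated deficit is of order $\tfrac{ng}{2}$, and the symplectic structure gives no extra leverage on rank-one subbundles. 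The bound in \cite{CH2} is obtained by a dimension count on spaces of symplectic extensions, not by peeling off lines, so your alternative of quoting the literature is the correct move---but the pertinent reference is \cite{CH2} rather than \cite{Hit1}.

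For closedness you take a different and arguably cleaner route. The paper writes
\[
\LQ \ = \ \bigcap_{x \in C}\left( (\ev_x^e)^{-1}\bigl(\LG(W|_x)\bigr) \cup \Indet(\ev_x^e) \right),
\]
using that $\LG(W|_x)\subset\Gr(n,W|_x)$ is closed and that the indeterminacy locus of each evaluation map is closed. Your argument via the universal pairing $\wedge^2\mathcal{E}\to p_C^*L$ is the standard functorial construction and has the advantage of giving $\LQ$ a natural closed-subscheme structure. The one point worth spelling out is why fiberwise vanishing is closed: since $\mathcal{E}$ is $Q$-flat with locally free fibers of constant rank, it is locally free on $Q\times C$; then twisting by a sufficiently ample line bundle on $C$ and pushing forward realizes the condition as the zero locus of a morphism of locally free sheaves on $Q$.
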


\begin{proof} By \cite[Theorem 1.4 and Remark 3.6]{CH2}, any symplectic bundle has a Lagrangian subbundle of degree $-e_0$ for some $e_0 \le \left\lceil   \frac{n(g - 1 - \ell)}{2} \right\rceil$. For $e > e_0$, we can take an elementary transformation of the Lagrangian subbundle of degree $-e_0$ to get a Lagrangian subsheaf of degree $-e$. This proves the nonemptyness.

For the closedness: Write $\Indet(\ev_x^e)$ for the indeterminacy locus of $\ev_x^e$:
\[ \Indet(\ev_x^e) \ = \ \{ [E \to W] \in Q^{n, e + n\ell} (W) : E \hbox{ is not saturated at } x \} , \]
which is a closed subset of $Q^{n, e + n\ell}(W)$. It is easy to see that
\[ \LQ \ = \ \bigcap_{x \in C} \left( (\ev_x^e)^{-1} \left( \LG ( W|_x ) \right) \cup \Indet ( \ev_x^e ) \right) . \]
As $\LG(W|_x)$ is closed in $\Gr (n , W|_x)$, we see that $\LQ$ is closed. \end{proof}

\begin{remark}
The genus assumption $g \ge 2$ is imposed to get the sharp bound $e \ge  \frac{n(g - 1 - \ell)}{2} $ for nonemptyness of $\LQ$. This bound is proven in \cite{CH2} for $g \ge 2$, but for the case $g=0$ or $1$,  we still have an existence of a bound to guarantee the nonemptyness of $\LQ$.
\end{remark}

We denote by $\LQo$ the open sublocus of $\LQ$ corresponding to vector bundle quotients. The following is  a generalization of \cite[Lemma 4.3]{CH3}.

\begin{prop} \label{Zariski} \ Assume that $\LQo$ is nonempty. Let $[j \colon E \to W]$  be a point of $\LQo$.
\begin{enumerate}
\item[(a)] Every irreducible component of $\LQo$ has dimension at least
\[
\chi(C, L \otimes   \Sym^2 E^* ) 
 \ = \  \frac{n(n+1)}{2} (\ell -g+1) + (n+1)e.
\]
\item[(b)] The Zariski tangent space of  $\LQo$  at $[j: E \to W]$  is given by
\[ T_j \LQo \ \cong \ H^0 ( C, L \otimes \Sym^2 E^* ) . \]  
\item[(c)] If $h^1( C, L \otimes   \Sym^2 E^*  ) =0$, then  $\LQo$ is smooth and of dimension $\chi(C, L \otimes   \Sym^2 E^* ) $ at $j$.
\end{enumerate} \end{prop}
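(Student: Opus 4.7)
The plan is to compute the Zariski tangent space at $[j]$ directly from the infinitesimal Lagrangian condition, and then derive parts (a) and (c) from the corresponding obstruction theory for Lagrangian deformations.

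For (b): the ambient Quot scheme $Q^{n, e+n\ell}(W)$ has Zariski tangent space $H^0(C, \Hom(E, W/E))$ at $[j \colon E \hookrightarrow W]$ since $E$ is a subbundle. Since $E$ is Lagrangian, $\omega$ induces an isomorphism $W/E \isom E^* \otimes L$, so
\[ \Hom(E, W/E) \ \cong \ E^* \otimes E^* \otimes L \ = \ (\Sym^2 E^* \otimes L) \oplus (\wedge^2 E^* \otimes L). \]
A first-order deformation $E_t \subset W \otimes \cc[t]/(t^2)$ is locally the graph of a lift $\tilde\phi \colon E \to W$ of its class $\phi \in \Hom(E, W/E)$. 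Expanding $\omega|_{E_t \otimes E_t}$ modulo $t^2$ and using that $E$ is already Lagrangian, the isotropy condition reduces to the vanishing of
\[ \omega(e_1, \tilde\phi(e_2)) + \omega(\tilde\phi(e_1), e_2), \]
which, by skew-symmetry of $\omega$ and the identification above, expresses precisely the symmetry of the associated bilinear form $E \otimes E \to L$. This cuts out the summand $\Sym^2 E^* \otimes L$, yielding $T_j \LQo \cong H^0(C, L \otimes \Sym^2 E^*)$.

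For (a) and (c), the cleanest approach is to identify $\LQo$ with the open subscheme of the relative morphism scheme parametrizing sections $s \colon C \to \LG(W)$ of the Lagrangian Grassmannian bundle $\LG(W) \to C$ with $\deg(s^* \mathcal{S}) = -e$, where $\mathcal{S}$ is the tautological Lagrangian subbundle. Since $\LG(W) \to C$ is smooth with relative tangent bundle $T_{\LG(W)/C} \cong \Sym^2 \mathcal{S}^* \otimes p^* L$ (with $p \colon \LG(W) \to C$), classical deformation theory for morphism schemes gives tangent space $H^0(C, L \otimes \Sym^2 E^*)$ and obstruction space $H^1(C, L \otimes \Sym^2 E^*)$ at $[j]$. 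The general principle then yields the lower bound
\[ \dim_{[j]} \LQo \ \ge \ h^0(L \otimes \Sym^2 E^*) - h^1(L \otimes \Sym^2 E^*) \ = \ \chi(C, L \otimes \Sym^2 E^*), \]
proving (a); and when $h^1(C, L \otimes \Sym^2 E^*) = 0$ all obstructions vanish, so $\LQo$ is smooth of the expected dimension at $[j]$, proving (c).

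The main technical point is the identification of the relative tangent bundle $T_{\LG(W)/C} \cong \Sym^2 \mathcal{S}^* \otimes p^* L$, which is essentially the pointwise computation carried out above for (b) (that the tangent space to $\LG(W|_x)$ at a Lagrangian subspace $\Lambda$ is $\Sym^2 \Lambda^* \otimes L|_x$), globalized over $C$. Once this is in place, parts (a), (b), (c) follow uniformly from the standard deformation theory of sections of a smooth fibration over a curve, without any further computation.
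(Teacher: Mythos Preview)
Your proof is correct and follows essentially the same line as the paper's: both identify points of $\LQo$ with sections of the Lagrangian Grassmannian bundle $\LG(W) \to C$ and read off the tangent and obstruction spaces from the relative tangent bundle $T_{\LG(W)/C} \cong \Sym^2 \mathcal{S}^* \otimes p^* L$. The only cosmetic difference is that the paper routes the dimension bound in (a) through a component of the Hilbert scheme $\mathrm{Hilb}^P(\LG(W))$ and a generically injective rational map back to $\LQo$, whereas you invoke the deformation theory of the section (morphism) scheme directly; since for a section of a smooth fibration the normal bundle coincides with the pullback of the relative tangent bundle, the two arguments are equivalent.
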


\begin{proof} (a) Let $Z$ be an irreducible component of $\LQo$. Let $[j \colon E \to W]$  be a point of   $Z$ contained in no other  component. Let $\sigma \colon C \to \LG(W)$ be the section corresponding to the subbundle $[j \colon  E  \to W]$. Let $P$ be the Hilbert polynomial of the subscheme $\sigma(C)$ of $\LG(W)$ and $Y$ a component of ${\mathrm{Hilb}}^P(\LG(W))$ containing the point $[\sigma(C)]$. The normal bundle of $\sigma(C)$ in $\LG(W)$ is isomorphic to the restriction of the vertical tangent bundle $T_\pi = \Ker(d \pi)$, which in turn is isomorphic to $L \otimes \Sym^2 E^*$. Hence by the deformation theory of Hilbert schemes, we have
\[
\dim_{[\sigma(C)]} Y \ \ge \ \chi(C, L \otimes \Sym^2 E^*).
\]
Since a general member of $Y$ corresponds to a section of $\pi$, there is a rational map $Y \dashrightarrow \LQo$ defined on a nonempty open subset. As $[\sigma(C)]$ is mapped to $j$, the image of $Y$ lies inside $Z$. Clearly the map $Y \dashrightarrow Z$ is generically injective, so we see that $\dim Z \ge \chi(C, L \otimes \Sym^2 E^*)$.

(b) Let $\alpha \colon E \to W/E \cong E^* \otimes L$ represent a tangent vector to the  Quot scheme $Q^{n, e+n\ell}(W)$ at $[j \colon E \to W]$. For each $x \in C$, the section $\alpha$ defines an element $\alpha(x) \in T_{j(E|_x)} \Gr(n, W|_x)$, and the deformation preserves isotropy of $E$ if and only if $\alpha(x)$ is tangent to the subvariety $\LG (W|_x) \subset \Gr (n, W|_x)$ for all $x$.

The result now follows from the following description of the tangent space of the Lagrangian Grassmannian:
\[
T_{j(E|_x)} \LG (W|_x)  =  (L \otimes \Sym^2 E^*)|_x \ \subset \ (L \otimes  E^* \otimes E^*)|_x  =  T_{j(E|_x)} \Gr (n, W|_x) .
\]

(c) By (a) and (b), if $h^1(C, L \otimes \Sym^2 E^*) = 0$ then 
\[
\dim T_j \LQo \ = \ \chi(C, L \otimes \Sym^2 E^*)  \ \le \ \dim_j \LQo.
\] 
Thus we have equality and $\LQo$ is smooth at $j$.
\end{proof}

\section{Symplectic extensions} \label{SymplecticExts}

If $F$ is a Lagrangian subbundle of a symplectic bundle $W$, then there is a short exact sequence $0 \to F \to W \to F^* \otimes L \to 0$. An extension induced by a symplectic structure in this way will be called a \textsl{symplectic extension}. In this section, we recall or prove some facts on symplectic extensions which we will need later.


Recall that any locally free sheaf $V$ on $C$ has a flasque resolution
\[ 0 \ \to \ V \ \to \ \sRat (V) \ \to \ \sPrin (V) \ \to \ 0 , \]
where $\sRat (V) = V \otimes_{\Oc} \sRat (\Oc)$ is the sheaf of sections of $V$ with finitely many poles, and $\sPrin (V) = \sRat (V)/V$ is the sheaf of principal parts with values in $V$. Taking global sections, we have a sequence of Abelian groups
\begin{equation} 0 \ \to \ H^0 (C, V) \ \to \ \Rat (V) \ \to \ \Prin (V) \ \to \ H^1 (C, V) \ \to \ 0 . \label{cohomseq} \end{equation}
The \textsl{support} of a principal part $p \in \Prin(V)$ is the finite set of points $x \in C$ where $p_x \in \sRat (V)_x / V_x$ is nontrivial. 
 Thus $p$ is a global section of $V(D) / V$ for some effective divisor $D = \Sigma_{i=1}^m d_i x_i$ on $C$. Abusing notation, $p$ can be represented by a finite sum
\[ \frac{v_1}{z_1^{d_1}} + \cdots + \frac{v_m}{z_m^{d_m}} \]
where $z_i$ is a local parameter at $x_i$ for $1 \le i \le m$, and $v_i$ is a local section of $V$ near $x_i$. Note that the principal part $p$ is determined by the images of the $v_i$ in $V_{x_i} / \left( m_{x_i}^{d_i} \cdot V_{x_i} \right)$.

For $\beta \in \Rat (V)$, we denote by $\overline{\beta}$ the principal part $\beta \mod H^0(C,V)$. If $p \in \Prin(V)$, we write $[ p ]$ for the associated class in $H^1 ( C, V)$.

\subsection{Symmetric principal parts and symplectic extensions} \label{SymmPptSympExt}

Let $F$ be any bundle of rank $n$. The sheaf $L^{-1} \otimes F \otimes F$ has a natural involution exchanging the factors of $F$. The \textsl{transpose} $^tp$ of a principal part $p \in \Prin( L^{-1} \otimes F \otimes F )$ 
 is defined to be the image of $p$ under this involution. Then $p$ is \textsl{symmetric} if $^tp = p$, or equivalently $p \in \Prin(L^{-1} \otimes \Sym^2 F)$. Note that this is stronger than the condition $[ ^tp ] = [ p ]$ in $H^1 ( C, L^{-1} \otimes F \otimes F )$.

Now any $p \in \Prin ( L^{-1} \otimes F \otimes F)$ defines naturally an $\Oc$-module map $F^* \otimes L \to \sPrin (F)$, which we also denote $p$. Suppose $p$ is a symmetric principal part in $\Prin ( L^{-1} \otimes \Sym^2 F )$. Following \cite[Chapter 6]{K}, we define a sheaf $W_p$ by
\begin{equation} W_p (U) \ := \ \{ (f, \varphi) \in \sRat(F)(U) \oplus ( F^* \otimes L )(U) : \overline{f} = p (\varphi) \} . \label{W} \end{equation}
for each open set $U \subseteq C$. It is not hard to see that this is an extension of $F^* \otimes L$ by $F$.

Now there is a canonical pairing $\langle \: , \rangle \colon \sRat (F) \oplus \sRat ( F^* \otimes L ) \to \sRat (L)$. By an easy computation (see the proof of \cite[Criterion 2.1]{Hit1} for a more general case), the standard symplectic form on $\sRat(F) \oplus \sRat(F^* \otimes L)$ defined on sections by
\begin{equation} \omega \left( ( f_1 , \phi_1 ) , ( f_2 , \phi_2 ) \right) \ = \ \langle f_2 , \phi_1 \rangle - \langle f_1 , \phi_2 \rangle \label{standardsympform} \end{equation}
restricts to a \emph{regular} symplectic form on $W_p$ with respect to which the subsheaf $F$ is Lagrangian. This shows that for each  symmetric principal part $p \in  \Prin(L^{-1} \otimes \Sym^2 F)$ there is a naturally associated symplectic extension of  $F^* \otimes L$ by $F$. We now give a refinement of \cite[Criterion 2.1]{Hit1}, showing that every symplectic extension can be put into this form.

\begin{lemma} \label{sympext} Let $W$ be any symplectic bundle and $F \subset W$ a Lagrangian subbundle.
\begin{enumerate}
\item[(a)] There is an isomorphism of symplectic bundles $\iota \colon W \isom W_p$ for some symmetric principal part $p \in \Prin (L^{-1} \otimes \Sym^2 F)$ such that $\iota(F)$ is the natural copy $W_p \cap \sRat (F)$ of $F$ in $W_p$, which is given over each open set $U \subseteq C$ by $\{ ( f, 0 ) : f \in F(U) \}$.
\item[(b)] The class of the extension $0 \to F \to W_p \to F^* \otimes L \to 0$ in $ H^1 ( C, L^{-1} \otimes \Sym^2 F)$ coincides with $[p]$.
\end{enumerate} \end{lemma}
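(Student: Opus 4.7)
The strategy is to produce a rational Lagrangian splitting $s \colon F^* \otimes L \dashrightarrow W$ of the quotient map $\pi \colon W \to F^* \otimes L$, so that $\omega$-orthogonality of the image of $s$ translates into symmetry of the principal part. A rational section of $\pi$ exists on a Zariski open subset of $C$. At each $x \in C$, the set of Lagrangian subspaces of $W|_x$ complementary to $F|_x$ is nonempty (it is the ``open cell'' of $\LG(W|_x)$ at the point opposite to $F|_x$), so after shrinking the open subset we may arrange the rational section to be isotropic. This yields the desired $s$.

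Next I identify the principal part of $s$ with a symmetric element $p \in \Prin(L^{-1} \otimes \Sym^2 F)$. Since $\pi \circ s$ is the identity, a regular section, the principal part of $s$ (a priori in $\Prin(\Hom(F^* \otimes L, W))$) lies in the subspace $\Prin(\Hom(F^* \otimes L, F)) = \Prin(L^{-1} \otimes F \otimes F)$. In a local Lagrangian trivialization $W|_U \cong (F \oplus F^* \otimes L)|_U$ equipped with the standard symplectic form \eqref{standardsympform}, the section $s$ has the form $\varphi \mapsto (t(\varphi), \varphi)$ for a rational section $t$ of $L^{-1} \otimes F \otimes F$, and the isotropy condition $\omega(s(\varphi_1), s(\varphi_2)) = 0$ rewrites as
\[ \langle t(\varphi_1), \varphi_2 \rangle \ = \ \langle t(\varphi_2), \varphi_1 \rangle, \]
forcing $t$, and hence $p$, to lie in the symmetric subspace $L^{-1} \otimes \Sym^2 F$.

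For part (a), I would define $\iota \colon W \to \sRat(F) \oplus (F^* \otimes L)$ by $\iota(w) := (w - s(\pi(w)), \pi(w))$. The first component lies in $\sRat(F) \subset \sRat(W)$ because $\pi$ annihilates it rationally, and its principal part equals $p(\pi(w))$ after fixing the sign $p := -\overline{s}$; hence $\iota$ factors through $W_p$. Bijectivity follows from $\pi \circ s = \mathrm{id}$ together with a rank count, and by construction $F \subset W$ maps isomorphically onto $W_p \cap \sRat(F)$. A direct fiberwise computation, using the isotropy of $s$, verifies that $\iota$ pulls back the standard symplectic form \eqref{standardsympform} on $W_p$ to $\omega$.

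For part (b), the rational splitting $s$, transported to $W_p$ via $\iota$, has principal part exactly $p$ by construction. The standard coboundary recipe arising from the sequence \eqref{cohomseq} computes the extension class by taking any rational splitting of $0 \to F \to W_p \to F^* \otimes L \to 0$ and reading off its principal part, which yields $[p]$. The main obstacle I anticipate is the first step: passing from a generic rational splitting to a \emph{Lagrangian} rational splitting, which relies on the pointwise nonemptiness of Lagrangian complements to a given Lagrangian. Once $s$ is secured, the remaining steps amount to careful bookkeeping with principal parts and sign conventions.
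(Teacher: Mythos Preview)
Your approach is correct and takes a genuinely different route from the paper's. The paper first realizes $W$ as $W_{p'}$ for an \emph{arbitrary} principal part $p' \in \Prin(L^{-1} \otimes F \otimes F)$, using only the underlying vector bundle extension, and then analyzes the given symplectic form on $W_{p'}$: it has the shape $\omega'((f_1,\phi_1),(f_2,\phi_2)) = \langle A f_2, \phi_1\rangle - \langle A f_1, \phi_2\rangle + \langle B\phi_2, \phi_1\rangle$ for some $A \in \Aut(F)$ and rational antisymmetric $B$, and regularity of $\omega'$ forces $p := Ap' + \tfrac{1}{2}\overline{B}$ to be symmetric; the isomorphism is then $(f,\phi) \mapsto (Af + \tfrac{1}{2}B\phi, \phi)$. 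Your method instead produces a rational \emph{Lagrangian} splitting $s$ from the outset, so symmetry of $p$ is built in and the auxiliary automorphism $A$ never appears. This is more conceptual; what the paper's computation buys is that it is self-contained given only the vector-bundle extension lemma, whereas your argument rests on the existence of $s$.

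On that point, your justification of Step~1 is not quite an argument: knowing that Lagrangian complements to $F|_x$ exist pointwise does not let you ``shrink the open subset'' to make a given rational splitting isotropic, since isotropy is a closed rather than an open condition on splittings. The clean fix is to take any rational splitting $s_0$, observe that the defect $(\phi_1,\phi_2) \mapsto \omega(s_0\phi_1, s_0\phi_2)$ is a rational section $B$ of $L^{-1} \otimes \wedge^2 F$, and replace $s_0$ by $s_0$ corrected by $\tfrac{1}{2}B$; equivalently, choose a Lagrangian complement to $F_{K(C)}$ in the symplectic $K(C)$-vector space $W_{K(C)}$. The same correction over local rings supplies the ``local Lagrangian trivialization with standard form'' you invoke in Step~2 (this is not automatic from a mere local splitting of the extension); once you have both, the difference of two Lagrangian splittings lies in $\sRat(L^{-1}\otimes\Sym^2 F)$, and the symmetry of $p$ follows immediately.
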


\begin{proof} (a) As much of this proof is computational, we outline the main steps and leave the details to the interested reader.

Since $F$ is isotropic, $W$ is an extension $0 \to F \to W \to F^* \otimes L \to 0$. By \cite[Lemma 3.1]{Hit0}\footnote{This is unpublished, but it is the obvious generalization of the rank two case treated in \cite[Lemma 6.5]{K}.}, there exists ${p'} \in \Prin ( L^{-1} \otimes F \otimes F )$ such that the sheaf of sections of $W$ is given by
\begin{equation} U \ \mapsto \ W_{p'} (U) \ = \ \left\{ ( f, \phi ) \in \sRat(F)(U) \oplus (F^* \otimes L)(U) : p' ( \phi ) = \overline{f} \right\} . \label{Wpp} \end{equation}

Using the facts that $F$ is isotropic and the form is antisymmetric and nondegenerate, one shows that there exist $A \in \Aut ( F )$ and $B \in \Rat( L^{-1} \otimes \wedge^2 F )$ such that the given symplectic form $\omega'$ on the sheaf $W_{p'}$ is given by
\begin{equation} \omega' ( ( f_1 , \phi_1 ) , (f_2 , \phi_2 ) ) \ = \ \langle A(f_2) , \phi_1 \rangle - \langle A(f_1) , (\phi_2) \rangle + \langle B(\phi_2) , \phi_1 \rangle \label{pairing} \end{equation}

Using in addition that the restriction of $\omega'$ to $W_{p'}$ is regular, one shows that
\[ A{p'} - {^{^t}\!(A{p'})} + \overline{B} \ = \ \left( A{p'} + \frac{\overline{B}}{2} \right) - {^{^t}\!\left( A{p'} + \frac{\overline{B}}{2} \right)} \ = \ 0 \ \in \ \Prin ( L^{-1} \otimes F \otimes F ) . \]
Hence $p := A{p'} + \frac{1}{2}\overline{B}$ is a symmetric principal part.

Let now $W_p$ be defined as in (\ref{W}). As mentioned above, the form $\omega$ in (\ref{standardsympform}) restricts to a regular symplectic form on $W_p$. A tedious but elementary calculation shows that
\[ (f', \phi') \ \mapsto \ \left( A(f') + \frac{B}{2}(\phi') , \phi' \right) \]
defines an isomorphism $\iota \colon W_{p'} \isom W_p$ satisfying $\iota^* \omega = \omega'$ and mapping $F \subset W_{p'}$ to $F \subset W_p$.

Part (b) is proven exactly as for extensions of line bundles in \cite[Lemma 6.6]{K}. \end{proof}

\subsection{Lagrangian subbundles in reference to a fixed symplectic extension}

From (\ref{W}), we obtain a splitting $\Rat (W) = \Rat (F) \oplus \Rat( F^* \otimes L )$. This is a vector space of dimension $\rank(W)$ over the field $K(C)$ of rational functions on $C$. If $\beta \in \Rat ( \Hom ( F^* \otimes L , F ))$, we write $\Gamma_{\beta}$ for the graph of the induced map of $K(C)$-vector spaces $\Rat (F^* \otimes L) \to \Rat (F)$. Abusing notation, we also denote by $\Gamma_\beta$ the associated sub-$\Oc$-module of $\sRat (F) \oplus \sRat (F^* \otimes L)$.

Moreover, if $F$ and $G$ are subsheaves of a sheaf $H$, we write $F \cap G$ for the presheaf $U \mapsto F(U) \cap G(U)$. As this is the kernel of the composed maps $F \to H \to H/G$ and $G \to H \to H/F$, in fact it is a sheaf.

\begin{prop} Let $p \in \Prin ( L^{-1} \otimes \Sym^2 F )$ be any symmetric principal part. Let $W_p$ be as in (\ref{W}). \label{cohomlifting}
\begin{enumerate} 
\item[(a)] There is a bijection between the $K(C)$-vector space $\Rat \left( L^{-1} \otimes \Sym^2 F \right)$ and the set of Lagrangian subbundles $E \subset W_p$ with $\rank (E \cap F) = 0$. The bijection is given by $\beta \mapsto \Gamma_{\beta} \cap W_p$. The inverse map sends a Lagrangian subbundle $E$ to the uniquely determined $\beta \in \Rat( L^{-1} \otimes \Sym^2 F)$ satisfying $\Rat(E) = \Gamma_{\beta}$.
\item[(b)] If $E = \Gamma_{\beta} \cap W_p$ then projection to $F^* \otimes L$ gives an isomorphism of sheaves $E \isom \Ker \left( (p - \overline{\beta}) \colon F^* \otimes L \to \sPrin (F) \right)$. Note that $\left[ p - \overline{\beta} \right] = [p] $ is the class of the symplectic extension  $\delta (W_p) \in H^1(C, L^{-1} \otimes \Sym^2 F)$.
\item[(c)] For a fixed $p - \overline{\beta} \in \Prin ( L^{-1} \otimes \Sym^2 F )$, the set of Lagrangian subbundles $\Gamma_{\beta'} \cap W_p$ with $\overline{\beta'} = \overline{\beta}$ is a torsor over $H^0 ( C, L^{-1} \otimes \Sym^2 F)$. In particular, it is nonempty.
\end{enumerate}
\end{prop}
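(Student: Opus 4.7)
\emph{Plan.} The strategy is to translate between three equivalent descriptions of a Lagrangian subbundle $E\subset W_p$ with $\rank(E\cap F)=0$: its rational hull $\Rat(E)\subset\Rat(W_p)=\Rat(F)\oplus\Rat(F^*\otimes L)$ as a $K(C)$-subspace, its realization as $\Gamma_\beta\cap W_p$ for a symmetric rational section $\beta$, and the kernel of $p-\overline\beta$ under projection to $F^*\otimes L$. For the forward direction of (a), given $\beta$, a section $(f,\phi)$ of $\sRat(F)\oplus(F^*\otimes L)$ lies in $\Gamma_\beta\cap W_p$ precisely when $f=\beta(\phi)$ and $\overline{\beta(\phi)}=p(\phi)$, i.e.\ when $\phi\in\Ker(p-\overline\beta)$. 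The second projection thus identifies $\Gamma_\beta\cap W_p$ with $\Ker(p-\overline\beta)\subset F^*\otimes L$, which is torsion-free of full rank $n$ (as $p-\overline\beta$ has torsion image in $\sPrin(F)$) and hence locally free on $C$. Vanishing of $E\cap F$ is immediate. For isotropy, the form (\ref{standardsympform}) gives
\[ \omega\bigl((\beta(\phi_1),\phi_1),(\beta(\phi_2),\phi_2)\bigr)=\langle\beta(\phi_2),\phi_1\rangle-\langle\beta(\phi_1),\phi_2\rangle, \]
which vanishes because contracting $\phi_1,\phi_2\in F^*\otimes L$ with a symmetric element of $L^{-1}\otimes\Sym^2 F$ yields a symmetric pairing.

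Conversely, given a Lagrangian $E$ with $\rank(E\cap F)=0$, the relation $\Rat(E)\cap\Rat(F)=\Rat(E\cap F)=0$ shows that $\Rat(E)$ is the graph of a unique $K(C)$-linear map $\beta\colon\Rat(F^*\otimes L)\to\Rat(F)$, equivalently an element of $\Rat(L^{-1}\otimes F\otimes F)$; isotropy at the generic point forces $\beta$ symmetric, i.e.\ $\beta\in\Rat(L^{-1}\otimes\Sym^2 F)$. The equality $E=\Gamma_\beta\cap W_p$ then follows from noting that both are locally free of rank $n$ in $W_p$, share the rational hull $\Gamma_\beta$, and project isomorphically onto the same locally free subsheaf of $F^*\otimes L$. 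Part (b) is then exactly the projection identification; the equality $[p-\overline\beta]=[p]$ in $H^1(C,L^{-1}\otimes\Sym^2 F)$ follows from (\ref{cohomseq}) since $\overline\beta$ lies in the image of $\Rat\to\Prin$ and so dies in $H^1$, and by Lemma \ref{sympext}(b) this class equals $\delta(W_p)$. For (c), the preimage of $\overline\beta$ under $\Rat\to\Prin$ is, by (\ref{cohomseq}), the coset $\beta+H^0(C,L^{-1}\otimes\Sym^2 F)$, a nonempty torsor over $H^0$; the bijection of (a) transports this to a torsor of Lagrangian subbundles, with distinct $\beta'$ giving distinct subbundles since distinct graphs $\Gamma_{\beta'}$ cut out distinct intersections with $W_p$.

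The main obstacle is the inverse direction of (a): once $E$ has been matched to $\beta$ via its rational hull, one must upgrade the generic coincidence $\Rat(E)=\Gamma_\beta$ to the integral equality $E=\Gamma_\beta\cap W_p$ as subsheaves of $W_p$. This reduces via the second projection to identifying both with $\Ker(p-\overline\beta)\subset F^*\otimes L$, for which the flasque-resolution language developed in Section \ref{SymplecticExts} is precisely what is needed.
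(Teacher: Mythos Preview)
Your approach is sound and unpacks what the paper simply defers to \cite{Hit1}, but there are two gaps.

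First, in the forward direction you verify that $\Gamma_\beta\cap W_p$ is locally free of rank $n$ and isotropic, but not that it is \emph{saturated} in $W_p$; the proposition asserts it is a Lagrangian sub\emph{bundle}. This is not automatic, since its projection $\Ker(p-\overline\beta)$ is in general a non-saturated subsheaf of $F^*\otimes L$ (the paper devotes the Remark after Lemma~\ref{GoodCoords} to exactly this point). The check is short but should be made: if $(f,\phi)\in W_p$ and $h\cdot(f,\phi)\in\Gamma_\beta$ for some local nonzero regular function $h$, then $hf=\beta(h\phi)=h\,\beta(\phi)$ in $\sRat(F)$, whence $f=\beta(\phi)$ and $(f,\phi)\in\Gamma_\beta$.

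Second, your inverse-direction justification is circular. You assert that $E$ and $\Gamma_\beta\cap W_p$ ``project isomorphically onto the same locally free subsheaf of $F^*\otimes L$,'' but you have only established $\pi(E)\subseteq\Ker(p-\overline\beta)$; equality of these images is equivalent to what you are trying to prove, and $\pi(E)$ need not be saturated in $F^*\otimes L$. The correct argument stays upstairs in $W_p$: from $\Rat(E)=\Gamma_\beta$ and $E\subset W_p$ one has $E\subseteq\Gamma_\beta\cap W_p$ with torsion quotient; since $E$ is by hypothesis a subbundle, $W_p/E$ is locally free, so its torsion subsheaf $(\Gamma_\beta\cap W_p)/E$ vanishes. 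Parts (b) and (c) are correct as written.
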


\begin{proof} Parts (a) and (b) follow from \cite[Theorem 3.3 (i) and (iii)]{Hit1}. Note that as the symplectic form on $W$ is given by (\ref{standardsympform}), the $\alpha$ referred to in \cite{Hit1} is zero.

Part (c) is a slight generalization of \cite[Corollary 3.5]{Hit1}. From the description (\ref{W}), we see that $(\beta(\phi), \phi) \in \Gamma_\beta (U)$ belongs to $W_p (U)$ if and only if $\phi \in \Ker ( p - \overline{\beta} )(U)$, so $\Gamma_{\beta} \cap W_p$ is a lifting of $\Ker (p - \overline{\beta})$. By part (a), it is isotropic and saturated. 

Moreover, under the bijection in (a) the set of liftings $\Gamma_{\beta'} \cap W_p$ with $\overline{\beta'} = \overline{\beta}$ is in canonical bijection with the set of $\beta'$ such that $\overline{\beta'} = \overline{\beta}$. By (\ref{cohomseq}), this is a torsor over $H^0 ( C, L^{-1} \otimes \Sym^2 F )$. \end{proof}

\begin{remark} In part (c) above, we characterize different liftings of $\Ker(q)$ for a fixed $q \in \Prin(L^{-1} \otimes \Sym^2 F)$ with $\delta(W) = [q]$. More generally, there can exist also distinct $\beta, \beta'$ such that $\Ker (p - \overline{\beta}) = \Ker (p - \overline{\beta'})$ as subsheaves of $F^* \otimes L$. Such $\beta$ and $\beta'$ correspond to distinct liftings $E \hookrightarrow W$. We shall study this phenomenon in Lemma \ref{DifferentLiftings}. 
\end{remark}

We now give a slight refinement of Lemma \ref{sympext}, essentially allowing us to choose convenient coordinates on $W$.

\begin{lemma} Let $F$ and $E$ be Lagrangian subbundles of $W$ such that $\rank (F \cap E) =  0$. Then there exists a symmetric principal part $p_0 \in \Prin ( L^{-1} \otimes \Sym^2 F )$ and an isomorphism of symplectic bundles $\iota \colon W \isom W_{p_0}$, such that
\[ \iota(E) \ = \ \Gamma_0 \cap  W_{p_0} \ = \ 0 \oplus \Ker (p_0) , \]
where $\Gamma_0 = 0 \oplus \Rat (F^* \otimes L)$ is the graph of the zero map $\sRat (F^* \otimes L) \to \sRat ( F )$. \label{GoodCoords} \end{lemma}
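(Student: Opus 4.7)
The plan is to start from Lemma \ref{sympext}, which already puts $W$ in the form $W_p$ with $F$ going to its standard copy, and then to apply a further ``shear'' automorphism to normalise $E$ into the graph $\Gamma_0$. So first I would invoke Lemma \ref{sympext} to produce a symmetric principal part $p \in \Prin(L^{-1} \otimes \Sym^2 F)$ and a symplectic isomorphism $\iota_1 \colon W \isom W_p$ sending $F$ onto $\{(f,0) : f \in F\} = W_p \cap \sRat(F)$. Since $\rank(\iota_1(E) \cap F) = \rank(E \cap F) = 0$, Proposition \ref{cohomlifting}(a) applies to give a unique $\beta \in \Rat(L^{-1} \otimes \Sym^2 F)$ with $\iota_1(E) = \Gamma_\beta \cap W_p$.

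Next I would introduce the candidate shear $\iota_2 \colon \sRat(F) \oplus \sRat(F^* \otimes L) \to \sRat(F) \oplus \sRat(F^* \otimes L)$ defined by
\[ \iota_2(f, \phi) \ = \ (f - \beta(\phi), \phi), \]
and set $p_0 := p - \overline{\beta}$, which is again symmetric because both $p$ and $\overline{\beta}$ are. A direct check using the condition $\overline{f} = p(\phi)$ defining $W_p$ shows that $\overline{f - \beta(\phi)} = p_0(\phi)$, so $\iota_2$ restricts to an isomorphism of sheaves $W_p \isom W_{p_0}$, with inverse $(f', \phi) \mapsto (f' + \beta(\phi), \phi)$. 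Since $\iota_2(f,0) = (f,0)$, the copy of $F$ is preserved. Moreover if $(f, \phi) \in \Gamma_\beta \cap W_p$ then $f = \beta(\phi)$ in $\Rat(F)$, hence $\iota_2(f, \phi) = (0, \phi)$; combined with $\phi \in \Ker(p_0)$ coming from membership in $W_{p_0}$, this shows $\iota_2$ sends $\Gamma_\beta \cap W_p$ onto $\Gamma_0 \cap W_{p_0} = 0 \oplus \Ker(p_0)$, as desired.

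The main thing to verify, and the only nontrivial point, is that $\iota_2$ is a symplectic isomorphism for the standard form (\ref{standardsympform}) on $W_p$ and $W_{p_0}$. Expanding $\omega(\iota_2(f_1,\phi_1), \iota_2(f_2,\phi_2))$ produces $\omega((f_1,\phi_1),(f_2,\phi_2))$ plus the correction term $\langle \beta(\phi_1), \phi_2 \rangle - \langle \beta(\phi_2), \phi_1 \rangle$; this correction vanishes precisely because $\beta \in \Rat(L^{-1} \otimes \Sym^2 F)$, i.e.\ because $\beta$ is symmetric as a bilinear form on $F^* \otimes L$. Setting $\iota := \iota_2 \circ \iota_1$ then completes the proof. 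This symmetry verification is really the heart of the argument; everything else is bookkeeping matching the sheaf description (\ref{W}).
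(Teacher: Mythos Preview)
Your proof is correct and follows essentially the same approach as the paper: first invoke Lemma \ref{sympext} and Proposition \ref{cohomlifting}(a) to write $W = W_p$ with $E = \Gamma_\beta \cap W_p$, then apply the shear $(f,\phi) \mapsto (f - \beta(\phi), \phi)$ to pass to $W_{p_0}$ with $p_0 = p - \overline{\beta}$, and verify this shear is symplectic using the symmetry of $\beta$. Your write-up is slightly more explicit than the paper's in spelling out the symplectic verification, but the argument is the same.
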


\begin{proof} From Lemma \ref{sympext} and Proposition \ref{cohomlifting}, we may assume that $W$ is an extension
\[ 0 \ \to \ F \ \to \ W_p \ \to \ F^* \otimes L \ \to \ 0 \]
for a symmetric $p \in \Prin ( L^{-1} \otimes F \otimes F )$, and that
\[ E \ = \ \Gamma_\beta \cap W_p \ \cong \ \Ker ( p - \overline{\beta} ) \]
for some $\beta \in \Rat(L^{-1} \otimes \Sym^2 F)$. Then $(f, \phi) \mapsto (f - \beta(\phi), \phi)$ defines an isomorphism $\iota \colon W_p \isom W_{p-\overline{\beta}}$ sending $E = \Gamma_\beta \cap W_p$ to $\Gamma_0 \cap W_{p - \overline{\beta}}$. Set $p_0 := p - \overline{\beta}$. If $\omega$ and $\omega_0$ are the standard symplectic forms (\ref{standardsympform}) on $W_p$ and $W_{p_0}$ respectively, then an easy computation using the symmetry of $\beta$ shows that $\iota^* \omega_0 = \omega$. \end{proof}

\begin{remark} Apropos Lemma \ref{GoodCoords} and (\ref{W}): As $\Ker ( p_0 )$ is only a subsheaf of $F^* \otimes L$, it may be of interest to indicate how it lifts to a saturated subsheaf, or a subbundle of $W_{p_0}$. For simplicity, suppose $L = \Oc$ and $\Image ( p_0 ) \cong \cc_x$, so $p_0$ is represented by $\frac{\eta_1 \otimes \eta_1}{z}$ where $z$ is a local parameter at $x$ on a neighborhood $U$ and $\eta_1$ is some regular section of $F|_U$ which is nonzero at $x$.

Complete $\eta_1$ to a frame $\{ \eta_i \}$ for $F$ on $U$ and let $\{ \phi_i \}$ be the dual frame for $F^*$. Then the principal part $p_0 ( \phi_1 ) \in \sPrin (F)$ is represented by
\[ \frac{\eta_1 \otimes \eta_1}{z} ( \phi_1 ) \ = \ \frac{ \langle \eta_1 , \phi_1 \rangle \cdot \eta_1}{z} \ = \ \frac{\eta_1}{z} . \]
Hence in view of (\ref{W}), a frame for $W_p$ on $U$ is given by
\begin{equation} (\eta_1, 0) , \ldots , (\eta_n, 0 ) , \left( \frac{\eta_1}{z}, \phi_1 \right) ,  ( 0, \phi_2 ) , \ldots , ( 0 , \phi_n ) . \label{coordsW} \end{equation}
Now a frame over $U$ for the subsheaf $0 \oplus \Ker (p_0)$ of $W_{p_0}$ is given by
\begin{equation} (0, z \cdot \phi_1), (0, \phi_2 ) , \ldots , (0, \phi_n ). \label{frameKerp} \end{equation}
Writing $(0, z \cdot \phi_1)$ in terms of the frame (\ref{coordsW}), we have
\[ ( 0, z \cdot \phi_1 ) \ = \ z \cdot \left( \frac{\eta_1}{z} , \phi_1 \right) - ( \eta_1  , 0 ) . \]
From this we see that the images of (\ref{frameKerp}) in $W_p|_x$ are independent. Hence $0 \oplus \Ker (p_0) \hookrightarrow W_{p_0}$ is a vector bundle inclusion at $x$. This computation also shows that the intersection of the subbundles $\Gamma_0 \cap W_{p_0}$ and $F$ at $x$ is the line spanned by $\eta_1(x)$ in $F|_x$. \qed 
\end{remark}

\subsection{Isotropic liftings of an elementary transformation}

Let $W$ be a symplectic extension $0 \to F \to W_p \to F^* \otimes L \to 0$, and let $0 \to E \xrightarrow{\gamma} F^* \otimes L \to \tau \to 0$ be an elementary transformation where $\tau$ is some torsion sheaf. Assume that there is a lifting $j \colon E \to W$. By Proposition \ref{cohomlifting}, there exists a rational map $\beta \colon \sRat (F^* \otimes L) \to \sRat(F)$ such that $E \subseteq \Gamma_\beta \cap W_p \cong \Ker ( p - \overline{\beta} )$. The  following result, generalizing Proposition \ref{cohomlifting} (c), provides the main idea to ``linearize'' the space of Lagrangian subsheaves of $W$ which respects the fixed symplectic extension and elementary transformation.

\begin{lemma} \label{DifferentLiftings} The set of liftings of $\gamma \colon E \to F^* \otimes L$ to Lagrangian subsheaves of $W = W_p$ is a torsor over $H^0 \left( C, \Hom ( E, F ) \cap \sRat ( L^{-1} \otimes \Sym^2 F ) \right)$. \end{lemma}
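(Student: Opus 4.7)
The plan is to parametrize liftings of $\gamma$ by sheaf maps $\lambda \colon E \to \sRat(F)$, translate the conditions defining a Lagrangian lifting into a principal part identity plus a symmetry condition, and then read off the group of allowable differences.

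First, any lifting $j \colon E \to W_p$ whose second component equals $\gamma$ is uniquely of the form $j(e) = (\lambda(e), \gamma(e))$ for a sheaf map $\lambda \colon E \to \sRat(F)$, and the condition $j(E) \subset W_p$ (see (\ref{W})) becomes the principal part identity $\overline{\lambda} = p \circ \gamma$ as maps $E \to \sPrin(F)$. Using the standard symplectic form (\ref{standardsympform}), the isotropy of $j(E)$ reduces to
\[
\langle \lambda(e_2), \gamma(e_1) \rangle \ = \ \langle \lambda(e_1), \gamma(e_2) \rangle
\]
for all local sections $e_1, e_2 \in E$. Since $\gamma$ is a generic isomorphism (its cokernel is the torsion sheaf $\tau$), there is a unique rational map $\beta \in \Rat(L^{-1} \otimes F \otimes F)$ with $\lambda = \beta \circ \gamma$ generically, and the displayed identity is then equivalent to symmetry of $\beta$, i.e.\ $\beta \in \Rat(L^{-1} \otimes \Sym^2 F)$.

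Next, given two liftings $\lambda_0, \lambda_1$ the difference $\mu := \lambda_1 - \lambda_0$ has vanishing principal part (both satisfy $\overline{\lambda_i} = p \circ \gamma$), so $\mu \colon E \to F$ is a regular $\Oc$-module homomorphism, i.e.\ a section of $\Hom(E, F)$. Subtracting the isotropy identities for $\lambda_0$ and $\lambda_1$ shows that the rational extension of $\mu$ via $\gamma$ is symmetric, hence lies in $\sRat(L^{-1} \otimes \Sym^2 F)$. Conversely, for any such $\mu$ and any lifting $\lambda_0$, the map $\lambda_0 + \mu$ is again a lifting: the principal part identity is preserved since $\mu$ is regular, and the extra cross-terms added to the isotropy pairing cancel by symmetry of $\mu$. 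This establishes the torsor structure, which is non-empty by the standing hypothesis that a lifting $j$ exists.

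The main subtlety is to give a clean meaning to the intersection $\Hom(E, F) \cap \sRat(L^{-1} \otimes \Sym^2 F)$: the two factors live in different ambient spaces a priori, but become comparable as subsheaves of $\Hom(E, \sRat(F))$ via the generic isomorphism $\gamma$, which identifies rational homomorphisms $E \to F$ with rational symmetric forms on $F^* \otimes L$. Keeping careful track of this identification at the points of $\Supp(\tau)$, where $\gamma$ degenerates, is the only real bookkeeping point; once it is in place, the principal part and isotropy computations above are essentially forced by (\ref{W}) and (\ref{standardsympform}).
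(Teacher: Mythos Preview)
Your proof is correct and follows essentially the same route as the paper's. The only cosmetic difference is that the paper obtains the symmetric $\beta_i$ by applying Proposition~\ref{cohomlifting}(a) to the saturations $\overline{j_i(E)}$, whereas you derive the symmetry of $\beta$ directly from the isotropy identity $\langle \lambda(e_2), \gamma(e_1)\rangle = \langle \lambda(e_1), \gamma(e_2)\rangle$ together with the fact that $\gamma$ is a generic isomorphism; after that, both arguments compute the difference of two liftings and check the converse in the same way.
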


\noindent Before starting the proof, let us indicate how the intersection of $\Hom (E, F)$ and $\sRat ( L^{-1} \otimes \Sym^2 F )$ is well defined. Since $L^{-1} \otimes F \xrightarrow{^t\gamma} E^*$ is an elementary transformation, $E^*$ is a subsheaf of $\sRat (L^{-1} \otimes F)$. Hence $\Hom (E, F) = E^* \otimes F$ and $\sRat ( L^{-1} \otimes \Sym^2 F)$ are both sub-$\Oc$-modules of $\sRat ( L^{-1} \otimes F \otimes F)$.

\begin{proof}
Suppose that $j_1 \colon E \to W$ and $j_2 \colon E \to W$ are two liftings of $\gamma$ to Lagrangian subsheaves. Then the saturations $\overline{j_i (E)}$ are Lagrangian subbundles. By Proposition \ref{cohomlifting} (a), there exist uniquely defined $\beta_1$, $\beta_2 \in \Rat (L^{-1} \otimes \Sym^2 F)$ such that for $i = 1, 2$ the map $j_i \colon E \to W_p$ is given by
\[ v \ \mapsto \ (\beta_i (v), \gamma (v)) \ \in \ W_p \ \subset \ \sRat(F) \oplus (F^* \otimes L) . \]
Then we calculate
\[ j_1 (v) - j_2 (v) \ = \ ( \beta_1(v), \gamma (v)) - (\beta_2(v), \gamma(v) ) \ = \ ( (\beta_1 - \beta_2)(v), 0 ) . \] 
Hence $j_1 - j_2$ defines an element of $H^0 \left( C, \Hom (E, F) \cap \sRat(L^{-1} \otimes \Sym^2 F) \right)$.

Conversely, suppose $v \mapsto (\beta(v), \gamma(v))$ is a lifting of $\gamma$ as above. If $\alpha \in \Rat ( L^{-1} \otimes \Sym^2 F )$ is regular on $\gamma(E) \subset F$, then $v \mapsto (\beta(v) + \alpha(v) , \gamma(v) )$ uniquely determines another rank $n$ subsheaf of $W_p$ lifting $\gamma(E)$. Since $\beta + \alpha$ is symmetric, by Proposition \ref{cohomlifting} (a), this subsheaf is isotropic. \end{proof}

Motivated by Lemma \ref{DifferentLiftings}, we make a definition.

\begin{definition} Let $0 \to E \xrightarrow{\gamma} F^* \otimes L \to \tau \to 0$ be as above. From $L^{-1} \otimes F \xrightarrow{^t\gamma} E^*$ we deduce an inclusion $L^{-1} \otimes F \otimes F \to E^* \otimes F$. We define $S_\gamma$ to be the saturation of $L^{-1} \otimes \Sym^2 F$ in $E^* \otimes F$. \label{defSgamma}
\end{definition}

\noindent Note that $S_\gamma$ depends only on $\gamma$, not on an extension $0 \to F \to W \to F^* \otimes L \to 0$.

\begin{lemma} \quad \label{Sgamma}
\begin{enumerate}
\item[(a)] There is an exact sequence $0 \to L^{-1} \otimes \Sym^2 F \to S_\gamma \to \tau_1 \to 0$, where $\tau_1$ is a torsion sheaf. In particular, $S_\gamma$ is locally free of rank $\frac{1}{2}n(n+1)$ and degree $\deg ( L^{-1} \otimes \Sym^2 F ) + \deg( \tau_1 )$.
\item[(b)] There is an exact sequence $0 \to S_\gamma \to L \otimes \Sym^2 E^* \to \tau_2 \to 0$, where $\tau_2$ is a torsion sheaf.
\item[(c)] If $\tau$ has reduced support, then $\tau_1$ is isomorphic to $\tau$. In particular, in this case $\deg (S_\gamma) = \deg (L^{-1} \otimes \Sym^2 F) + \deg (\tau)$. \end{enumerate}
\end{lemma}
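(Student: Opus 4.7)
My plan is to sandwich $S_\gamma$ between $L^{-1} \otimes \Sym^2 F$ and a symmetric-squared bundle on the $E$-side, using the dualised short exact sequence of $\gamma$, and then to extract (c) from a local analysis near $\Supp \tau$. The two ambient sheaves arise by dualising $\gamma$ in opposite directions.

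For (a), dualise $0 \to E \to F^* \otimes L \to \tau \to 0$ to obtain an inclusion $L^{-1} \otimes F \hookrightarrow E^*$ with torsion cokernel $\tau'$ of the same length as $\tau$. Tensoring by $F$ gives $L^{-1} \otimes F \otimes F \hookrightarrow \Hom(E, F) = E^* \otimes F$ with torsion cokernel $\tau' \otimes F$. Restricting both sides to the symmetric part of the ambient $\sRat(L^{-1} \otimes F \otimes F)$ yields $L^{-1} \otimes \Sym^2 F \hookrightarrow S_\gamma$, and the quotient $\tau_1$ injects into the torsion sheaf $\Hom(E, F)/(L^{-1} \otimes F \otimes F)$. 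Since $S_\gamma$ is a torsion-free subsheaf of the locally free $\Hom(E, F)$ of rank equal to $\rank(L^{-1} \otimes \Sym^2 F) = \tfrac{n(n+1)}{2}$, it is locally free of that rank.

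For (b), tensor the dualised sequence by $L$ to get $F \hookrightarrow L \otimes E^*$ with torsion cokernel, hence $\Hom(E, F) \hookrightarrow L \otimes E^* \otimes E^*$ with torsion cokernel. The crux is identifying $S_\gamma$ with $\Hom(E, F) \cap (L \otimes \Sym^2 E^*)$ inside $L \otimes E^* \otimes E^*$ (I read the printed ``$L \otimes \Sym^2 F^*$'' as a slip for ``$L \otimes \Sym^2 E^*$'', both because this is the sheaf featuring in Proposition~\ref{Zariski} and because already for $n = 1$, $F = L = \oc$, $E = \oc(-x)$ one has $S_\gamma = \oc(x) \not\hookrightarrow \oc = L \otimes \Sym^2 F^*$). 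This identification reduces to the elementary local statement that $\sigma \in \Hom(E, F)$ is symmetric as a rational section of $L^{-1} \otimes F \otimes F$ iff the associated $L$-valued bilinear form $(v, w) \mapsto \langle \sigma(v), \gamma(w)\rangle$ on $E$ is symmetric. The cokernel $\tau_2$ then injects into the torsion sheaf $L \otimes E^* \otimes E^*/\Hom(E, F)$, hence is torsion.

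For (c), work locally at each $x \in \Supp \tau$. Interpreting ``reduced support'' as $\tau_x \cong k(x)$, choose a local frame $\eta_1, \ldots, \eta_n$ of $F$ with dual frame $\phi_i$ of $F^*$ so that $E$ is locally generated by $z\phi_1, \phi_2, \ldots, \phi_n$ for a uniformiser $z$ at $x$. Writing a general element of $\Hom(E, F)$ as $\sum c_{ij} \eta_i \otimes \eta_j$, the pole-order constraint reads $\mathrm{ord}_x c_{ij} \ge -a_i$ with $a_1 = 1$ and $a_i = 0$ for $i \ge 2$; the symmetry condition $c_{ij} = c_{ji}$ sharpens this to $\mathrm{ord}_x c_{ij} \ge -\min(a_i, a_j)$, so only $c_{11}$ can carry a simple pole. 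Hence $\tau_1|_x \cong k(x)$, matching $\tau|_x$, and gluing over $\Supp \tau$ gives $\tau_1 \cong \tau$; the degree formula is immediate from (a). The main obstacle is (b): keeping careful track of the two ambient sheaves with their respective notions of symmetry and verifying compatibility at $\Supp \tau$.
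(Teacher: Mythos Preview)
Your argument is correct and follows essentially the same route as the paper: dualise $\gamma$ to embed $L^{-1}\otimes F$ into $E^*$, tensor and intersect with the symmetric part to get (a); push $F$ into $L\otimes E^*$ to get (b); and carry out the same local frame computation for (c). Your observation that the printed $L\otimes\Sym^2 F^*$ in (b) should read $L\otimes\Sym^2 E^*$ is right and matches both the paper's own proof of (b) and its later use in Corollary~\ref{vanishing}.
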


\begin{proof}
(a) This follows from the definition of $S_\gamma$.

(b) Consider the following diagram (which is not exact), where the horizontal arrows are induced by ${^t\gamma}$.
\[ \xymatrix{ L^{-1} \otimes \Sym^2 F \ar@{^{(}->}[d] \ar[r] & S_\gamma \ar[r] \ar@{^{(}->}[d] & ( \Sym^2 E^* ) \otimes L \ar@{^{(}->}[d] \\
L^{-1} \otimes F \otimes F \ar[r] & E^* \otimes F \ar[r] & E^* \otimes E^* \otimes L } \]
As the horizontal arrows are isomorphisms at the generic point, we obtain (b).


(c) If the support of $\tau$ is reduced, then so is that of the torsion sheaf
\[ \frac{ E^* }{{^t\gamma}(L^{-1} \otimes F)} . \]
At each $x \in \Supp (\tau)$, a local basis for $E^* \subset \sRat ( L^{-1} \otimes F )$ is given by
\[ \frac{\lambda \otimes \eta_1}{z} , \lambda \otimes \eta_2, \cdots, \lambda \otimes \eta_n , \]
where $\{ \eta_1, \ldots , \eta_n \}$ is a suitable local basis of $F$ and $\lambda$ a local generator of $L^{-1}$, and $z$ is a local parameter at $x$. Then a local basis of $E^* \otimes F$ is given by
\[
\left\{ \frac{\lambda \otimes \eta_1 \otimes \eta_k}{z} : 1 \le k \le n \right\} \: \cup \: \left\{\lambda \otimes \eta_m \otimes \eta_k : \begin{array}{c} 2 \le m \le n ; \\ 1 \le k \le n \end{array} \right\}.
\]
Thus a local basis of $S_\gamma$ is given by
\[ \left\{ \frac{\lambda \otimes \eta_1 \otimes \eta_1}{z} \right\} \ \cup \ \left\{ \frac{1}{2} \left( \lambda \otimes  \eta_k \otimes \eta_m + \lambda \otimes \eta_m \otimes \eta_k \right) : \begin{array}{l} 1 \le k,m \le n ; \\ (m, k) \ne (1, 1) \end{array} \right\} . \]
Therefore, in this case $\tau_1$ is a sum of torsion sheaves of degree 1, each supported at one of the points $x \in \Supp (\tau)$. The statement follows. \end{proof}

\begin{remark} \label{NonReduced} Lemma \ref{Sgamma} (c) is false if $\tau_1$ does not have reduced support. For example, suppose that $L = \Oc$ and $\tau = \cO_x \oplus \cO_x$, so $E^*$ is spanned near $x$ by $\frac{\eta_1}{z} , \frac{\eta_2}{z}, \eta_3 , \ldots , \eta_n$ for suitable $\eta_i$. Then $S_\gamma$ is spanned near $x$ by
\[ \frac{\eta_1 \otimes \eta_1}{z} , \quad \frac{\eta_1 \otimes \eta_2 + \eta_2 \otimes \eta_2}{z} , \quad \frac{\eta_2 \otimes \eta_2}{z}, \quad \begin{array}{l} \frac{1}{2} ( \eta_i \otimes \eta_j + \eta_j \otimes \eta_i ) : 1 \le i \le j \le n; \\ 
 (i, j) \not\in \{ (1, 1), (1, 2), (2, 2) \}. \end{array} \]
Thus $\tau_1 = \cO_x^{\oplus 3} \not\cong \tau$.
\end{remark}

\subsection{Geometry in extension spaces} \label{GeomCrit}

Let $F \to C$ be a bundle of rank $n$, and consider the scroll $\pi \colon \pp F \to C$. Throughout this subsection, we shall assume that $h^1 ( C, L^{-1} \otimes \Sym^2 F ) \ne 0$.

By Serre duality and the projection formula, there is an isomorphism
\[ \pp H^1 ( C, L^{-1} \otimes \Sym^2 F) \ \xrightarrow{\sim} | \cO_{\pp F} (2) \otimes \pi^* (\Kc L) |^* . \]
Thus we obtain a natural map $\psi \colon \pp F \dashrightarrow \pp H^1 (C, L^{-1} \otimes \Sym^2 F )$ with nondegenerate image.

We shall use an explicit description of $\psi$, given in \cite[{\S} 2]{CH2}. For each $x \in C$, there is a sheaf sequence
\begin{equation} 0 \to L^{-1} \otimes \Sym^2 F \to L^{-1}(x) \otimes \Sym^2 F \to \frac{L^{-1}(x) \otimes \Sym^2 F}{L^{-1} \otimes \Sym^2 F} \to 0 . \label{fiberwise} \end{equation}
Taking global sections, the associated long exact sequence is a subsequence of (\ref{cohomseq}) for $V = L^{-1} \otimes \Sym^2 F$. The following is easy to check by explicit computation.

\begin{lemma} The map $\psi$ can be identified fiberwise with the projectivization of the coboundary map in the associated long exact sequence of (\ref{fiberwise}), restricted to the image of the Segre embedding $\pp F|_x \hookrightarrow \pp ( L^{-1} \otimes \Sym^2 F )|_x$. In particular, the image of $\eta \in \pp F|_x$ is defined by the cohomology class of a principal part of the form $\frac{\lambda \otimes \eta \otimes \eta}{z}$, where $z$ is a local parameter at $x$ and $\lambda$ a local generator of $L^{-1}$. \label{alternativePsi} \end{lemma}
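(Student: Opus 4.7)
My plan is to make both the map $\psi$ and the coboundary description explicit, and then identify them via the residue form of Serre duality.

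I would first observe that $\psi$ is tautologically the rational map associated to the complete linear system $|\cO_{\pp F}(2) \otimes \pi^*(\Kx L)|$ on $\pp F$: the isomorphism in the statement, coming from Serre duality and the projection formula $H^0(\pp F, \cO_{\pp F}(2) \otimes \pi^*(\Kx L)) \cong H^0(C, \Sym^2 F^* \otimes \Kx L)$, identifies $\pp H^1(C, L^{-1} \otimes \Sym^2 F)$ with the dual of this linear system. Hence for $[\eta] \in \pp F|_x$, the image $\psi([\eta])$ is (after trivializing $\Kx$ and $L$ locally at $x$) the linear functional on $H^0(C, \Sym^2 F^* \otimes \Kx L)$ given by evaluation $s \mapsto s(x)(\eta \otimes \eta)$.

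Next, I would invoke the residue formulation of Serre duality due to Tate: for any bundle $V$ on $C$, the perfect pairing $H^1(C, V) \times H^0(C, V^* \otimes \Kx) \to \cc$ is $([p], s) \mapsto \sum_{x \in C} \mathrm{Res}_x \langle p, s \rangle$, where $\langle \cdot , \cdot \rangle$ is the canonical $\Kx$-valued pairing. Taking $V = L^{-1} \otimes \Sym^2 F$ and evaluating at the principal part $p = \frac{\lambda \otimes \eta \otimes \eta}{z}$ supported at $x$ yields
$$
\mathrm{Res}_x\!\left( \frac{\lambda \otimes \eta \otimes \eta}{z} \cdot s \right) \;=\; s(x)(\eta \otimes \eta),
$$
up to the same trivialization-dependent scalar as above. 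Consequently, the Serre dual of the linear functional $s \mapsto s(x)(\eta \otimes \eta)$ is precisely the class $\left[ \frac{\lambda \otimes \eta \otimes \eta}{z} \right]$, so this class coincides projectively with $\psi([\eta])$.

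Finally, to match this with the coboundary description: the quotient sheaf in (\ref{fiberwise}) is a skyscraper at $x$ whose global sections are canonically identified with $(L^{-1} \otimes \Sym^2 F)|_x$ once a uniformizer $z$ at $x$ is chosen, and the standard principal-part formula for the connecting homomorphism sends $\lambda \otimes \xi \in (L^{-1} \otimes \Sym^2 F)|_x$ to the cohomology class of $\frac{\lambda \otimes \xi}{z}$. The Segre embedding $\pp F|_x \hookrightarrow \pp(L^{-1} \otimes \Sym^2 F)|_x$ sends $[\eta]$ to $[\lambda \otimes \eta \otimes \eta]$, whose image under the coboundary is therefore $\left[ \frac{\lambda \otimes \eta \otimes \eta}{z} \right]$. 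Combined with the previous step, this gives the desired fiberwise identification of $\psi$ with the projectivized coboundary.

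The only anticipated difficulty is bookkeeping: tracking the local trivializations of $z$, $\lambda$, and $\Kx$, and confirming that the scalars produced by the projection formula, Serre duality, and the residue pairing really do agree. Since the assertion is projective and all constructions are linear, these scalars are harmless, and the verification is a routine explicit computation.
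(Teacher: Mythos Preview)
Your proposal is correct. The paper does not actually give a proof of this lemma: it merely states that ``the following is easy to check by explicit computation'' and moves on. Your argument via the residue form of Serre duality and the principal-part description of the connecting homomorphism is exactly the sort of explicit computation the paper has in mind, and the three pieces you assemble (evaluation functional, residue pairing, coboundary formula) fit together correctly.
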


\begin{remark} Although we do not use this fact, we mention that $\psi$ is an embedding if $F$ is stable and $\deg ( F ) < n(\frac{\ell}{2} - 1)$ (see \cite[Lemma 2.6]{CH2} for the case where $L = \Oc$). The important property of $\psi$ for us will be that the image is nondegenerate. This is central to Proposition \ref{QFcDense}.
\end{remark}

Now let $\eta_1 , \ldots , \eta_t$ be points of $\pp F|_{x_1} , \ldots , F|_{x_t}$ for distinct $x_1 , \ldots , x_t \in C$, and $\gamma \colon E \to F^* \otimes L$ be the corresponding elementary transformation. Let $S_\gamma$ be as in Definition \ref{defSgamma}.

\begin{lemma} \label{vanishinglemma} We have $h^1 ( C, S_\gamma ) = 0$ if and only if the points $\psi (\eta_1) , \ldots , \psi (\eta_t)$ span $\pp H^1 (C, L^{-1} \otimes \Sym^2 F)$. \end{lemma}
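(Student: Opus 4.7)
The plan is to exploit the short exact sequence from Lemma \ref{Sgamma}(a) and identify the resulting connecting map in cohomology with the map $\psi$ described in Lemma \ref{alternativePsi}. This will directly translate the vanishing of $h^1(S_\gamma)$ into a spanning condition.

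Concretely, I would start from the sequence
\[ 0 \ \to \ L^{-1} \otimes \Sym^2 F \ \to \ S_\gamma \ \to \ \tau_1 \ \to \ 0 \]
supplied by Lemma \ref{Sgamma}(a). Since $\tau_1$ has reduced support, by Lemma \ref{Sgamma}(c) it is isomorphic to $\tau$, hence supported at the $t$ distinct points $x_1, \ldots, x_t$ with $H^1(C, \tau_1) = 0$. Taking the long exact sequence in cohomology yields
\[ H^0(C, S_\gamma) \ \to \ H^0(C, \tau_1) \ \xrightarrow{\ \partial\ } \ H^1(C, L^{-1} \otimes \Sym^2 F) \ \to \ H^1(C, S_\gamma) \ \to \ 0 , \]
so the vanishing $h^1(C, S_\gamma) = 0$ is equivalent to the surjectivity of the connecting map $\partial$.

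Next I would identify $\partial$ explicitly. The $t$-dimensional space $H^0(C, \tau_1)$ has a canonical basis consisting of local generators at the points $x_i$. From the local computation carried out in the proof of Lemma \ref{Sgamma}(c), near each $x_i$ the stalk of $\tau_1$ is generated by the image of $\frac{\lambda_i \otimes \eta_i \otimes \eta_i}{z_i}$, where $\lambda_i$ is a local trivialization of $L^{-1}$ and $z_i$ a uniformizer at $x_i$. The standard description of the connecting homomorphism in terms of Čech lifting then sends this local generator to the cohomology class of the principal part $\frac{\lambda_i \otimes \eta_i \otimes \eta_i}{z_i} \in \Prin(L^{-1} \otimes \Sym^2 F)$. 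By Lemma \ref{alternativePsi}, this cohomology class is exactly a representative of $\psi(\eta_i) \in \pp H^1(C, L^{-1} \otimes \Sym^2 F)$.

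Consequently, the image of $\partial$ is the linear span of $\psi(\eta_1), \ldots, \psi(\eta_t)$ in $H^1(C, L^{-1} \otimes \Sym^2 F)$, and $\partial$ is surjective precisely when these points span $\pp H^1(C, L^{-1} \otimes \Sym^2 F)$. Combined with the equivalence from the previous paragraph, this gives the lemma. The only nontrivial ingredient is matching the Čech connecting map with the principal-part description of $\psi$; this is essentially the content of Lemma \ref{alternativePsi}, so no real obstacle arises.
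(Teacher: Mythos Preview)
Your proposal is correct and follows essentially the same approach as the paper: both arguments take the long exact sequence associated to the elementary transformation $0 \to L^{-1}\otimes\Sym^2 F \to S_\gamma \to \tau_1 \to 0$ from Lemma~\ref{Sgamma}(c), and invoke Lemma~\ref{alternativePsi} to identify the connecting map $\cc^t \to H^1(C, L^{-1}\otimes\Sym^2 F)$ with the map sending the $i$th basis vector to (a representative of) $\psi(\eta_i)$. The paper's version is simply a more compressed write-up of the same reasoning.
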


\begin{proof} The proof of Lemma \ref{Sgamma} (c) shows that $S_\gamma$ is an elementary transformation
\[
0 \longrightarrow L^{-1} \otimes \Sym^2 F \longrightarrow S_\gamma \longrightarrow \bigoplus_{k=1}^t \cc \cdot \frac{\lambda_k \otimes \eta_k \otimes \eta_k}{z_k} \longrightarrow 0 ,
\]
where the $z_k$ and $\lambda_k$ are defined analogously as in Lemma \ref{Sgamma} (c). In view of Lemma \ref{alternativePsi}, the lemma follows from the associated long exact sequence is
\begin{multline} \label{SgammaExactSeq}
0 \ \longrightarrow \ H^0 ( C, L^{-1} \otimes \Sym^2 F ) \ \longrightarrow \ H^0 ( C, S_\gamma ) \ \longrightarrow \ \cc^t \\ \longrightarrow \ H^1 ( C, L^{-1} \otimes \Sym^2 F) \ \longrightarrow \ H^1( C, S_\gamma) \ \longrightarrow \ 0 . \qedhere \end{multline}
\end{proof}

\begin{remark} \label{syzygies} Suppose that $h^0 (C, L^{-1} \otimes \Sym^2 F) = 0$. By exactness of (\ref{SgammaExactSeq}), we see that $H^0 (C, S_\gamma)$ is the vector space of linear relations of the points $\psi ( \eta_k )$ in $\pp H^1 (C, L^{-1} \otimes \Sym^2 F )$. \end{remark}

\section{Irreducibility of Lagrangian Quot schemes} \label{irred}

Let $W$ be an $L$-valued symplectic bundle of rank $2n$, where $\deg L = \ell$. In general, the Lagrangian Quot schemes $\LQ$ can be reducible, and also there may be irreducible components whose points all correspond to non-saturated subsheaves. In this section, we shall prove the following theorem, showing that for sufficiently large $e$, these phenomena disappear.

\begin{theorem} \label{thmA} Let $W$ be an arbitrary $L$-valued symplectic bundle over $C$. Then there exists an integer $e (W)$ such that for $e \ge e(W)$, the Lagrangian Quot scheme $\LQ$ is irreducible and generically smooth of dimension $(n+1)e + \frac{1}{2}n(n+1)(\ell -g+1)$, and a general point of $\LQ$ corresponds to a Lagrangian subbundle. Moreover, when $g \ge 2$, a sufficiently general point of $\LQ$ defines a stable vector bundle. \end{theorem}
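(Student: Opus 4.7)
The plan is to build an irreducible auxiliary parameter space $\mathcal{H}$ which dominates $\LQ$, using the language of symmetric principal parts (Proposition \ref{cohomlifting}, Lemma \ref{DifferentLiftings}) and the secant-geometric lifting criterion (Criterion \ref{lifting}). Fix a Lagrangian subbundle $F \subset W$ of minimal degree $-e_0$; existence is guaranteed by Lemma \ref{LQproperties} for $g \ge 2$ and by the analogous existence statement for $g \le 1$. Applying Lemma \ref{sympext} we identify $W \cong W_p$ for some symmetric $p \in \Prin(L^{-1} \otimes \Sym^2 F)$, with $F$ sitting inside as the natural copy.

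Set $t := e + \deg(F^* \otimes L)$ and define
\[
\mathcal{H} \ := \ \{ (\gamma, j) : \gamma \in \Elm^t(F^* \otimes L),\ j \text{ is a Lagrangian lifting of } \Ker(\gamma) \hookrightarrow W \},
\]
with projections $\pi_1 \colon \mathcal{H} \to \Elm^t(F^* \otimes L)$ and $\pi_2 \colon \mathcal{H} \to \LQ$. The source is irreducible (standard for rank-zero Quot schemes over a smooth curve). For $e \gg 0$, nondegeneracy of $\psi(\pp F) \subset \pp H^1(C, L^{-1} \otimes \Sym^2 F)$ forces the $\psi$-images of $t$ generic points $\eta_1, \ldots, \eta_t \in \pp F$ to span the entire $\pp H^1$; hence $\delta(W)$ automatically lies in their linear span, and Criterion \ref{lifting} produces a lifting over a dense open of $\Elm^t(F^* \otimes L)$. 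By Lemma \ref{DifferentLiftings} the fibre of $\pi_1$ over such $\gamma$ is a torsor over $H^0(C, S_\gamma)$, and Lemma \ref{Sgamma} together with $\deg S_\gamma \to \infty$ forces $h^1(C, S_\gamma) = 0$, so fibre dimension is constant and $\mathcal{H}$ is irreducible of the expected dimension.

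Next I would show $\pi_2$ is dominant onto every irreducible component of $\LQ$. The image contains the sublocus where $\rank(\overline{j(E)} \cap F) = 0$. To rule out components concentrated in $\{\rank(\overline{j(E)} \cap F) \ge 1\}$, I would stratify by the rank $k$ of this intersection: fixing $K = \overline{j(E)} \cap F$ forces $\overline{j(E)}/K$ to embed as a Lagrangian subsheaf of the symplectic bundle $K^\perp/K$ of rank $2(n-k)$, and an inductive dimension count shows these strata have dimension strictly smaller than $\chi(C, L \otimes \Sym^2 E^*)$ for $e \gg 0$. Combined with Proposition \ref{QFcircdense}, which deforms non-saturated Lagrangian subsheaves to subbundles, this yields irreducibility of $\LQ$ and density of $\LQo$.

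Finally, for a general $[j \colon E \to W] \in \LQo$, the bundle $E$ is stable when $g \ge 2$ by a Popa--Roth-style codimension argument: loci of $E$ admitting a destabilizing subsheaf have codimension tending to infinity with $e$. Moreover $E$ has sufficiently negative slope that $\mu(L \otimes \Sym^2 E^*) = \ell + 2e/n > 2g-2$, and semistability of $\Sym^2 E^*$ then gives $h^1(C, L \otimes \Sym^2 E^*) = 0$, so Proposition \ref{Zariski}(c) delivers generic smoothness together with the dimension $(n+1)e + \tfrac{1}{2}n(n+1)(\ell - g + 1)$. I expect the main obstacle to be the stratum estimate in the third paragraph: ruling out phantom components hidden in the $\rank(E \cap F) \ge 1$ loci is precisely where the combined geometric content of Criterion \ref{lifting} and Proposition \ref{QFcircdense} must do the real work, since the elementary-transformation parameterization directly sees only the $E \cap F = 0$ open piece.
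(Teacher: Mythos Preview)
Your construction of $\mathcal{H}$ and its irreducibility via $\pi_1$ essentially reproduces the paper's Propositions \ref{QFdominates} and \ref{QFirr}: both arguments parameterize the open locus $Q_F = \{E : \rank(\bE \cap F) = 0\}$ through $\Elm^t(F^* \otimes L)$ and invoke Lemma \ref{DifferentLiftings} for the fibres. The substantive divergence is in how you propose to pass from ``$Q_F$ is irreducible'' to ``$\LQ$ is irreducible''. You fix a \emph{single} $F$ and attempt to bound the complement $\{\rank(\bE \cap F) \ge 1\}$ by a stratification/dimension count; the paper instead lets $F$ vary over all Lagrangian subbundles of a carefully chosen degree $-f(W)$ (Lemma \ref{evdom}), so that the resulting $Q_F$ form an open \emph{cover} of $\LQ$ with pairwise nonempty intersections (Proposition \ref{covering}), and no complement ever needs to be estimated.

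Your approach has a genuine gap at exactly the point you flag. The dimension lower bound of Proposition \ref{Zariski}(a) is stated only for $\LQo$, i.e.\ at saturated points, so to exclude a putative component $Z \subset \{\rank(\bE \cap F) \ge 1\}$ by dimension you would first need $Z$ to contain a saturated point. But Proposition \ref{QFcircdense} only deforms non-saturated subsheaves to subbundles \emph{within} $Q_F$; it says nothing about points outside $Q_F$, so you cannot invoke it on $Z$. This is a circularity: you need density of $\LQo$ in $Z$ to run the dimension argument, but density of $\LQo$ is part of what you are trying to prove. The paper's device of varying $F$ breaks the circle: by Lemma \ref{evdom} (dominance of $\ev_x^f$), \emph{every} $E \in \LQ$ lies in some $Q_{F'}$, so Proposition \ref{QFcircdense} applies to every point of $\LQ$ for a suitable choice of $F'$. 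A secondary issue: your $F$ is chosen of minimal degree, whereas the paper's $f(W)$ is chosen precisely so that $\ev_x^{f(W)}$ is dominant, which is what makes the covering and intersection statements in Proposition \ref{covering} go through.
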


\begin{remark} Recall that $\LQo$ denotes the open sublocus of $\LQ$ corresponding to vector bundle quotients. Theorem \ref{thmA} shows in particular, for large $e$, that $\LQ$ is a compactification of $\LQo$ which is in fact the closure of $\LQo$ in $\Quot_{n, -e} (W)$. Other compactifications of $\LQo$ have also been studied; more generally, generalizations of Quot schemes to principal $G$-bundles: Hilbert schemes of sections of $\LG(W)$ as in \cite{HN} and moduli of stable maps to $\LG(W)$ as in \cite{H} and \cite{PR}. One attractive feature of $\LQ$ is that it naturally supports a universal family of sheaves, inherited from $\Quot_{n, -e} (W)$. \end{remark}

Before embarking on the proof of Theorem \ref{thmA}, let us compare our approach with the proof of the analogous statement for $\Quot_{n, -e} (W)$ in \cite{PR}. Replacing the Grassmannian bundle $\Gr (n, V)$ with the Lagrangian Grassmannian bundle $\LG (W)$, the argument of \cite[{\S} 3]{PR} shows the dimension bound
\[ \dim \LQ \ \le \ \binom{n+1}{2} + (n+1)(e - e_0) \]
where $-e_0$ is the degree of a maximal Lagrangian subbundle of $W$. However, the following difficulty arises.

If $V$ and $E$ are vector bundles of rank $N$ and $n$ respectively with $n < N$, then sheaf injections $E \to V$ are parameterized by an open subset of the linear space $H^0 (C, \Hom(E, V))$. One can then construct the irreducible space of stable rank $n$ subsheaves of $V$ as in \cite[Proposition 6.1]{PR}. However, for a symplectic bundle $W$, isotropic subsheaves $[j \colon E \to W]$ define a locally closed subset of $H^0(C, \Hom(E, W))$.  This seems to be a nonlinear subvariety, for which there is no guarantee of irreducibility.

To overcome this difficulty, we introduce a collection of auxiliary Lagrangian subbundles $F$ of $W$. It will emerge in view of Lemma \ref{DifferentLiftings} that Lagrangian subsheaves can be parameterized in a linear way if one also records how they are related to a fixed such $F$.

We proceed to the first ingredient of the proof, which is a result on the evaluation maps $\ev_x^e \colon \LQo \to \LG ( W|_x )$.

\subsection{Surjectivity of evaluation maps}

Recall that if $\cF \to B \times C$ is a family of objects over $C$, we denote by $\cF_b$ the restriction $\cF|_{\{ b \} \times C}$.

\begin{lemma} Let $\cW \to B \times C$ be a family of bundles of rank $r$ parameterized by an irreducible base $B$. Then there exists an integer $m_0 ( \cW )$ such that for $d \ge m_0 ( \cW )$, the evaluation map $\ev_x^d \colon \Quot_{1, -d} ( \cW_b )^\circ \to \pp \cW_b|_x$ is surjective. \label{evSurjRankOne} \end{lemma}

\begin{proof} Fix $p \in C$, and let $\Oc (p)$ be the corresponding effective line bundle of degree $1$ on $C$. By a semicontinuity argument, there is an integer $m_0 = m_0 ( \cW )$ such that for all $d \ge m_0$ and for all $b \in B$, the evaluation map on sections
\[
\phi_d \colon H^0 ( C , \cW_b (dp) ) \otimes \Oc \ \longrightarrow \ \cW_b (dp)
\]
is a surjective bundle map. 
 Thus the evaluation map $\Quot_{1, -d} ( \cW_b ) \dashrightarrow \cW_b|_x$ is surjective for all $d$ and $(b, x)$. We now show that its restriction to the locus $\Quot_{1, -d} ( \cW_b )^\circ$ of saturated subsheaves is surjective.

Choose a generating subspace $V \subseteq H^0 ( C, \cW_b (dp) )$ of dimension $r + 1$. Let $\psi \colon C \to \pp V$ be the map sending $x$ to the point defined by the one-dimensional subspace $V \cap H^0 ( C, \cW_b (-x) )$. It is not hard to see that $\psi$ is induced by a subsystem of $|\det ( \cW_b )|$, so $\psi (C)$ is nondegenerate.

Now let $\lambda$ be any line in $\cW_b|_x$. Write $V_\lambda := \phi_d^{-1} ( \lambda )$. As $\pp V_\lambda$ is a line in $\pp V$ and $\psi (C)$ is nondegenerate, clearly $\pp V_\lambda \cap \psi (C)$ is a finite set (containing $\psi(x)$). Any $s \in V_\lambda$ not lying over this finite set is a nowhere vanishing section of $\cW_b ( dp )$ which spans $\lambda$ at $x$. 
 The lemma follows. \end{proof}

\begin{prop} \label{evSurj} Let $\cW \to B \times C$ be a family of $L$-valued symplectic bundles of rank $2n$ parameterized by an irreducible base $B$. Then there exists an integer $f_0 ( \cW )$ such that if $f \ge f_0 ( \cW )$, then for all $(b, x) \in B \times C$, the evaluation map $LQ_{-f} ( \cW_b )^\circ \to \LG ( \cW_b|_x )$ is surjective. \end{prop}

\begin{proof} We shall prove the lemma by induction on $n$. If $\rank ( \cW ) = 2$ then, as any line subbundle is isotropic, it suffices to set $f_0 ( \cW ) = m_0 ( \cW )$ as in Lemma \ref{evSurjRankOne}.

Now suppose $2n \ge 4$. Set $m_0 = m_0 ( \cW )$ as in Lemma \ref{evSurjRankOne} and consider the relative Quot scheme and universal line bundle
\[ \cP \ \to \ \cQ uot_{1, -m_0} ( \cW )^\circ \times C \ \to \ B \times C \]
parameterizing degree $-m_0$ line subbundles of all the $\cW_b$. As any line subbundle is isotropic, $\cP^\perp / \cP$ is a family of $L$-valued symplectic bundles of rank $2n-2$ parameterized by the total space of $\cQ uot_{1, -m_0} ( \cW )^\circ \to B$. Since $\cQ uot_{1, -m_0} ( \cW )^\circ$ is quasi-projective over $B$, it has finitely many irreducible components. By induction, we may assume there exists an integer $f_0 ( \cP^\perp / \cP )$ such that for any $b \in B$ and any $P \in \Quot_{1, -m_0} ( \cW_b )^\circ$, if $a \ge f_0 ( \cP^\perp / \cP )$ then the evaluation map
\[ LQ_{-a} ( P^\perp / P )^\circ \ \to \ \LG \left( P^\perp / P \right)|_x \]
is surjective for all $x \in C$.

Now we return to the original family $\cW \to B \times C$. For any $(b, x)$, let $\Lambda$ be a Lagrangian subspace of a fiber $\cW_b|_x$. Choose any line $\lambda \subset \Lambda$. By Lemma \ref{evSurjRankOne}, we may choose a line subbundle $P \subset \cW_b$ of degree $-m_0$ with $P|_x = \lambda$. Then $\Lambda / \lambda$ is a Lagrangian subspace of $(P^\perp / P)|_x$. By the previous paragraph, for any $a \ge f_0 ( \cP^\perp / \cP )$ we may assume there exists $\tE \in LQ_{-a} ( P^\perp / P )^\circ$ such that $\tE|_x = \Lambda / \lambda$. The inverse image of $\tE$ in $\cW_b$ is a Lagrangian subbundle $E$ of $\cW_b$ of degree $\deg ( \tE ) + \deg (P) = -a - m_0$ satisfying $E|_x = \Lambda$. 
 Setting $f_0 ( \cW ) = m_0 + f_0 ( \cP^\perp / \cP )$, we have proven the proposition. \end{proof}

\subsection{Proof of Theorem \ref{thmA}}

Fix now an arbitrary $L$-valued symplectic bundle $W$ of rank $2n \ge 2$. We write $f_0$ for $f_0 (W)$ as defined in Proposition \ref{evSurj}, where $W$ is regarded as a family with one element. We now introduce the ``auxiliary'' Lagrangian subbundles $F$ mentioned at the start of {\S} \ref{irred}.

\begin{definition} Let $F$ be a Lagrangian subbundle of $W$, and write $\pi \colon W \to F^* \otimes L$ for the quotient map. We define
\[ Q^e_{F, \pi} \ := \ \{ E \in \LQ : \rank ( E \cap F ) \ = \ 0 \} . \]
When the surjection $\pi \colon W \to F^* \otimes L$ is clear from the context, we denote $Q^e_{F, \pi}$ simply by $\qf$ to ease notation. \end{definition}

\begin{remark}
\begin{enumerate}
\item[(a)] In view of the exact sequence $0 \to F \to W \xrightarrow{\pi} F^* \otimes L \to 0$, if $E \in \qf$, then $E$ is an elementary transformation of $F^* \otimes L$. Therefore, $\qf$ is nonempty only if $e \ge \deg (F) - n\ell$.
\item[(b)] For any Lagrangian subsheaf $F \subset W$ and any $e \ge f_0 (W)$, by Proposition \ref{evSurj} we can find $[j \colon E \to W] \in \LQo$ such that $E|_x \cap F|_x = 0$ for some and hence for general $x \in C$. Thus for any Lagrangian subbundle $F$ and any $e \ge f_0 (W)$, the locus $\qf$ contains a component whose general
member is saturated.
\item[(c)] Clearly $\qf$ is open in all components of $\LQ$, although it may be empty in some.
\end{enumerate}
\end{remark}

\noindent \textbf{In what follows, we shall always assume the ``auxiliary'' bundles $F$ have degree $-f_0 (W) =: -f_0$}.
This will give the best bound $e(W)$ in Theorem \ref{thmA} available with these methods.

\begin{prop} \quad \begin{enumerate} \item[(a)] Any Lagrangian subsheaf $E \subset W$ belongs to $Q_F^e$ for some Lagrangian subbundle $F$ of degree $-f_0$, where $\deg (E) = -e$. In particular, for any $e$, as $F$ varies in $LQ_{-f_0} (W)^\circ$ the loci $\qf$ form an open covering of $\LQ$.
\item[(b)] Suppose now that $e \ge f_0$. Then for $F, F' \in LQ_{-f_0} ( W )^\circ$, the intersection $\qf \cap Q_{F'}^e$ is nonempty.
\end{enumerate}
\label{cover} 
\end{prop}

\begin{proof} (a) Let $E$ be any Lagrangian subsheaf of $W$. By Proposition \ref{evSurj}, we can find a Lagrangian subbundle $F$ of degree $-f_0$ intersecting $E|_x$ in zero at some $x \in C$. Thus $[E \to W]$ belongs to $\qf$, where $\deg (E) = -e$.

(b) We must find a Lagrangian subsheaf $E$ of degree $-e$ intersecting both $F$ and $F'$ generically in rank zero. For some $x \in C$, choose $\Lambda \in \LG ( W|_x )$ intersecting both $F|_x$ and $F'|_x$ in zero. As by hypothesis $e \ge f_0$, by Proposition \ref{evSurj} we can find a Lagrangian subbundle $E$ of degree $-e$ satisfying $E|_x = \Lambda$. Then $[E \to W]$ is a point of $\qf \cap Q_{F'}^e$. \end{proof}

Next, for any bundle $G$, we denote by $\Elm^t ( G )$ the Quot scheme $\Quot^{0, t} ( G )$ parameterizing torsion quotients of degree $t$; that is, elementary transformations $G' \subset G$ with $\deg (G / G') = t$.

Now let $F$ be any degree $-f_0$ Lagrangian subbundle of $W$. For any $e$, given an element $[j \colon E \to W]$ of $\qf$, by composing with $\pi \colon W \to F^* \otimes L$ we get an elementary transformation $\pi \circ j \colon E \to F^* \otimes L$. The association $j \mapsto \pi \circ j =: \wj$ defines a morphism
\[ \pi_* \colon \qf \ \to \ \Elm^{e + f_0 + n\ell} (F^* \otimes L) . \]
We now study a certain subset of $\qf$ with some desirable properties. To ease notation, we set $t = t(e) := e + f_0 + n\ell$.

\begin{definition} \label{DefnQFc} For each $F$ as above, let $\qfc$ be the subset of $\qf$ of subsheaves $[j \colon E \to W]$ such that
\begin{enumerate}
\item[(i)] $E$ is saturated in $W$; that is, $j$ is a vector bundle injection;
\item[(ii)] $(F^* \otimes L) / \wj(E) \in \Elm^t ( F^* \otimes L)$ has reduced support; and
\item[(iii)] $h^1 (C, S_{\wj} ) = 0$.
\end{enumerate}
\end{definition}

\begin{remark}
\begin{enumerate}
\item[(a)] Note that conditions (ii) and (iii) depend only on the map $\wj \colon E \to F^* \otimes L$, and not a priori on $W$.
\item[(b)] By Remark \ref{NonReduced}, the family of sheaves over $\Elm^t ( F^* \otimes L )$ with fiber $S_\wj$ at $\wj$ is only flat over the locus of $\wj$ where $(F^* \otimes L) / \wj(E)$ has reduced support. Therefore, (iii) itself does not define an open condition on $\qf$, but (ii) and (iii) together define an open condition. Thus $\qfc$ is open in $\qf$.
%
\item[(c)] If $h^1 (C, L^{-1} \otimes \Sym^2 F ) = 0$ then (iii) follows from Lemma \ref{Sgamma} (a). Otherwise, by Lemma \ref{vanishinglemma}, if $(F^* \otimes L) / \wj (E)$ has reduced support then (iii) is equivalent to the points $\eta_1 , \ldots , \eta_t \in \pp F$ corresponding to the elementary transformation $E \subset F^* \otimes L$ spanning $\pp H^1 ( C, L^{-1} \otimes \Sym^2 F)$.
\end{enumerate} \end{remark}

\begin{remark} In view of conditions (ii) and (iii) and (\ref{SgammaExactSeq}), the locus $\qfc$ is nonempty only if $f_0 + n\ell + e \ge h^1 ( C, L^{-1} \otimes \Sym^2 F)$. By Riemann--Roch, this becomes
\[ e \ \ge \ n f_0 + \frac{n(n+1)}{2} ( g - 1 ) + \frac{n(n-1)}{2} \ell + h^0 ( C, L^{-1} \otimes \Sym^2 F ) . \]
Now for any $F \subset W$ we have $h^0 ( C, L^{-1} \otimes \Sym^2 F ) \le h^0 ( C, L^{-1} \otimes \Sym^2 W )$. In order to obtain later a bound which will apply to $\qfc$ for all $F$, \textbf{in what follows, we shall always assume that}
\begin{equation} e \ \ge \ e_1 (W) \ := \ n f_0 + \frac{n(n+1)}{2} ( g - 1 ) + \frac{n(n-1)}{2} \ell + h^0 ( C, L^{-1} \otimes \Sym^2 W ) + 1 . \label{eoneW} \end{equation}
(The final $+1$ term is required for technical reasons in Proposition \ref{QFcDense}.)
\end{remark}

\begin{prop} \label{vanishing} For any $[j \colon E \to W] \in \qfc$, the following holds.
\begin{enumerate}
\item[(a)] We have $h^1 ( C, L \otimes \Sym^2 E^* ) = 0$.
\item[(b)] The locus $\qfc$ is smooth and of the expected dimension $\chi ( C, L \otimes \Sym^2 E^* )$ at $E$.
\end{enumerate} \end{prop}

\begin{proof} By definition of $\qfc$, the subsheaf $j(E)$ is saturated in $W$. From Lemma \ref{Sgamma} (b) it follows that $H^1 (C, L \otimes \Sym^2 E^*)$ is a quotient of $H^1 ( C, S_{\wj} )$. As the latter is zero by definition of $\qfc$, we obtain statement (a). Part (b) now follows from Proposition \ref{Zariski} (c). 
\end{proof}

\begin{prop} \label{QFcDominates} Let $X$ be a nonempty irreducible component of $\qfc$. 
 Then for $t = e + f_0 + n \ell$, the map $\pi_* \colon X \to \Elm^t (F^* \otimes L)$ is dominant and has irreducible fibers. \end{prop}

\begin{proof} For any $[j \colon E \to W] \in X$, by Proposition \ref{vanishing}, we have
\[ \dim ( X ) \ = \ \chi ( C, L \otimes \Sym^2 E^* ) . \]
Moreover, by Lemma \ref{DifferentLiftings}, the fiber $\pi_*^{-1} \left( \wj \right)$ is an open subset of a torsor over $H^0 ( C, S_{\wj} )$. Hence it is irreducible, and of dimension $h^0 ( C, S_\wj )$. As $h^1 ( C, S_\wj ) = 0$ by definition of $\qfc$, in fact $\dim ( \pi_*^{-1} ( \wj ) ) = \chi ( C, S_\wj )$. Thus $\pi_* (X)$ has dimension at least
\[ \chi ( C, L \otimes \Sym^2 E^*) - \chi ( C, S_{\wj} ) \ = \ \deg ( L \otimes \Sym^2 E^* ) - \deg ( S_\wj ) , \]
the last equality using Lemma \ref{Sgamma} (b). Now by definition of $\qfc$, the torsion sheaf $(F^* \otimes L)/\wj(E)$ has reduced support. Using Lemma \ref{Sgamma} (c), we compute that
\[ \deg (L \otimes \Sym^2 E^*) - \deg(S_\wj) \ = \ nt , \]
which is exactly $\dim \Elm^{t} (F^* \otimes L)$. 
 Therefore, $\pi_* (X)$ is dense in $\Elm^t (F^* \otimes L)$ as the latter is irreducible.
 \end{proof}

\begin{prop} For any $F \in LQ_{-f_0} ( W )^\circ$, the locus $\qfc$ is irreducible. \label{QFcIrr} \end{prop}

\begin{proof} Suppose $X_1$ and $X_2$ were distinct irreducible components of $\qfc$. By Proposition \ref{QFcDominates}, the restriction of $\pi_*$ to either component is dominant with irreducible fibers. Therefore, $X_1$ and $X_2$ would have to intersect along a dense subset of a generic fiber. But this would contradict the smoothness of $\qfc$ proven in Proposition \ref{vanishing}. Thus $\qfc$ is irreducible. \end{proof}

The following key result shows the density of the well-behaved sublocus $\qfc \subset \qf$ for sufficiently large $e$.

\begin{prop} Let $e_1 ( W )$ be as defined in (\ref{eoneW}). For $e \ge e_1 ( W )$, the locus $\qfc$ is nonempty and dense in $\qf$. \label{QFcDense} \end{prop}

As the proof of this proposition is somewhat involved, we postpone it to {\S} \ref{ProofOfQFcDense}. The following is immediate from Propositions \ref{QFcIrr} and \ref{QFcDense}.

\begin{cor} For $e \ge e_1 (W)$, the locus $\qf$ is nonempty and irreducible. \label{QFIrr} \end{cor}

Now we can prove Theorem \ref{thmA}.

\begin{proof}[Proof of Theorem \ref{thmA}] By Proposition \ref{cover} (a), the loci $\qf$ are nonempty and cover $\LQ$. By Corollary \ref{QFIrr}, for $e \ge e_1 ( W )$, each $\qf$ is dense in exactly one component of $\LQ$. By Proposition \ref{cover} (b), if $e \ge f_0 (W)$, this must be the same component for all $F$. Therefore, $\LQ$ has only one irreducible component.

Furthermore, by Proposition \ref{QFcDense}, each $\qfc$ is in fact dense in $\LQ$. Hence, by Proposition \ref{vanishing}, a general point of $\LQ$ is smooth and represents a vector subbundle.

Finally, we show that general $E \in \LQ$ is stable as a vector bundle. For fixed $F \in LQ_{-f_0} ( W )^\circ$, if $t = e + f_0 + n\ell \ge n^2 (g-1) + 1$, then a general stable bundle $E$ of degree $-e$ occurs as an elementary transformation of $F^* \otimes L$. By Proposition \ref{QFcDominates}, if we assume that $e \ge \max\{ e_1 (W), n^2 (g-1) + 1 - f_0 - n \ell \}$ then a general element of $\Elm^t (F^* \otimes L)$ lifts to $W$. Hence, since $\LQ$ is irreducible, a general $E \in \LQ$ defines a stable vector bundle.

In summary, setting
\[ e(W) \ = \ \max\{ f_0 (W) , e_1 (W) , n^2 (g-1) + 1 - f_0 (W) - n \ell \} , \]
we obtain Theorem \ref{thmA}. \end{proof}

In analogy with \cite[Proposition 6.3]{PR}, Theorem \ref{thmA} implies immediately the following:

\begin{cor} If $g \ge 2$, then every symplectic bundle $W$ of rank $2n \ge 2$ can be fitted into a symplectic extension $0 \to E \to W \to E^* \otimes L \to 0$ where $E$ is a stable bundle. \end{cor}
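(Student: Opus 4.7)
The plan is to apply Theorem \ref{thmA} directly. Choose any integer $e \ge e(W)$ (in particular $e \ge \frac{n(g-1-\ell)}{2}$, so that Lemma \ref{LQproperties} guarantees nonemptiness of $\LQ$). By Theorem \ref{thmA}, the Lagrangian Quot scheme $\LQ$ is then irreducible, and since $g \ge 2$ by hypothesis, a general point $[j \colon E \to W] \in \LQ$ corresponds to a Lagrangian subbundle (that is, $j$ is a vector bundle injection) with $E$ stable as a vector bundle.

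Having such an $E$, it remains only to identify the quotient. Since $E \subset W$ is a Lagrangian subbundle, the nondegenerate $L$-valued symplectic form $\omega$ on $W$ induces the isomorphism $W \cong W^* \otimes L$, under which $E$ is sent to the annihilator $(W/E)^* \otimes L$, because $E$ is isotropic of maximal rank $n$. Dualizing, this yields a canonical isomorphism $W/E \isom E^* \otimes L$, so that the tautological short exact sequence
\[ 0 \ \to \ E \ \to \ W \ \to \ W/E \ \to \ 0 \]
becomes the symplectic extension
\[ 0 \ \to \ E \ \to \ W \ \to \ E^* \otimes L \ \to \ 0 . \]

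There is essentially no obstacle here: the work has been done in Theorem \ref{thmA}, whose assertion about stability of the generic Lagrangian subbundle is precisely what is needed. The only ingredient outside the theorem is the standard observation that for a Lagrangian subbundle the cokernel is canonically $E^* \otimes L$, which follows immediately from the isotropy condition and the nondegeneracy of $\omega$.
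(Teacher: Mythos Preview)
Your proof is correct and follows exactly the paper's approach: the paper states that Theorem \ref{thmA} implies the corollary immediately (in analogy with \cite[Proposition 6.3]{PR}), and you have simply spelled out this immediate implication. The only superfluous step is invoking Lemma \ref{LQproperties} for nonemptiness, since Theorem \ref{thmA} already guarantees that $\LQ$ is nonempty (and in fact of the expected positive dimension) for $e \ge e(W)$.
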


\subsection{Proof of Proposition \ref{QFcDense}} \label{ProofOfQFcDense}

We shall prove Proposition \ref{QFcDense} by showing that for any $[ E \to W ] \in \qf \setminus \qfc$, there exists a one-parameter deformation of $E$ in $\qf$ of which a general member belongs to $\qfc$. We shall use principal parts to construct this deformation explicitly. We begin by discussing families of principal parts and extensions. It will be convenient to work in slightly greater generality than in {\S} \ref{SymmPptSympExt}.

\subsubsection{Families of principal parts and extensions} \label{FamiliesPptExt}

Let $V$ be any vector bundle. For any $d \ge 1$, the set of $V$-valued principal parts with poles bounded by a divisor of degree $d$ is naturally the total space of a vector bundle $\cT_d (V) \to \Cd$ with fiber $H^0 ( C, V(D) / V )$ at $D \in \Cd$.

Now suppose $V$ is a subbundle of $\Hom (F_2, F_1)$ for bundles $F_1$ and $F_2$ over $C$. Let $\pi_C \colon \cT_d (V) \times C \to C$ be the projection. There is a natural map
\[ P \colon \pi_C^* F_2 \ \to \ \pi_C^* \sPrin ( F_1 ) \]
of sheaves over $\cT_d (V) \times C$, given on stalks by sending
\[ f \ \in \ (\pi_C^* F_2)_{(p, x)} \quad \hbox{to} \quad P(f) = p_x ( f ) \ \in \ \pi_C^* \sPrin (F_1)_{(p, x)} . \]

Using this, we can globalize the construction (\ref{W}) to a ``Poincar\'e bundle'' over $\cT_d (V) \times C$. Let $\cW \subset \pi_C^* (\sRat (F_1) \oplus F_2)$ be the subsheaf given on open subsets $U \subseteq \cT_d (V) \times C$ by
\begin{equation} \cW ( U ) \ = \ \{ ( f, f' ) \in \pi_C^* (\sRat (F_1 \oplus F_2) (U) : p_x ( f' ) = \overline{f} \hbox{ for all } (p, x ) \in U \} . \label{relW} \end{equation}
Clearly $\cW|_{ \{ p \} \times C}$ generalizes the extension $W_p$ defined in (\ref{W}). In particular, as in Lemma \ref{sympext} (b), one shows that the cohomology class of the extension $\cW_{ \{ p \} \times C}$ satisfies
\begin{equation} \delta \left( \cW|_{ \{ p \} \times C} \right) = [ p ] . \label{familyCohomClass} \end{equation}

Write $\cE := \Ker ( P )$, the subsheaf of $\pi_C^* F_2$ given on $U$ as above by $p_x (f') = 0$ for each $(p, x) \in U$. This is a family of elementary transformations of $F_2$. The following lemma highlights an important aspect of such families of sheaves.

\begin{lemma} \label{FamilyLifting} The family $\cE$ lifts to a family of saturated subsheaves of $\cW$. \end{lemma}

\begin{proof} By construction of (\ref{relW}), the image of the inclusion map
\[ \gamma \colon \cE \ \to \ \ \pi_C^* F_2 \ \to \ \pi_C^* \left( \sRat (F_1 ) \oplus F_2 \right) \]
in fact belongs to $\cW$. For saturatedness: For each $p \in \cT_d (V)$, we have
\[ \gamma \left( \cE_p \right) \ = \ \Gamma_0 \cap \cW_p \ \cong \ \Ker ( p ) . \]
Hence, by Proposition \ref{cohomlifting}, in fact $\gamma ( \cE_p )$ is saturated in $\cW_p$. \end{proof}

\begin{remark} The family of sheaves $\cE \subset \pi_C^* F_2$ over $\cT_d (V) \times C$ is not flat. For example, if $d = 1$ then $p \in \cT_1 (V)$ determines a point of $\Hom ( F_2, F_1 (x) )|_x$ for some $x \in C$, and $\deg (\cE_p)$ depends on the rank of the corresponding map $F_2|_x \to F_1 (x)|_x$. Moreover, $\cT_{d'} (V) \subset \cT_d (V)$ for $1 \le d' \le d$, which may also lead to jumps in $\deg (\cE_p)$. 
\end{remark}

\begin{definition} A \textsl{family of $V$-valued principal parts} parameterized by a scheme $S$ is a map $p \colon S \to \cT_d (V)$ for some $d \ge 1$. \end{definition}

We shall most often denote such a family by $\{ p_s : s \in S \}$ or just $\{ p_s \}$ if no confusion should result.

For $V \subseteq \Hom (F_2, F_1)$, any family $p \colon S \to \cT_d (V)$ induces by pullback a family of extensions $\left\{ 0 \to F_1 \to \cW_{p_s} \to F_2 \to 0 \right\}$ parameterized by $S$, together with a family of subsheaves $\{ \cE_s  \subset \cW_{p_s} : s \in S \}$, where as above $\cE_s \cong \Ker ( p_s )$ for each $s$. By Lemma \ref{FamilyLifting}, every $\cE_s$ is saturated in $\cW_s$, although $\cE$ is in general not flat over $S$.

\subsubsection{General principal parts}

Let $V$ be a vector bundle. A principal part $p \in \Prin (V)$ will be called \textsl{general} if it can be represented by a sum
\begin{equation} \sum_{i=1}^m \frac{\sigma_i}{z_i} \label{GeneralPpt} \end{equation}
where $z_1 , \ldots , z_m$ are local parameters at distinct points $x_1 , \ldots , x_m$ of $C$ respectively, and $\sigma_i$ is a frame element for $V$ near $x_i$. If $h^1 ( C, V ) \ne 0$, then by a similar argument to that in Lemma \ref{alternativePsi}, the cohomology class $\left[ \frac{\sigma_i}{z_i} \right]$ defines the image of the point $\sigma_i (x_i)$ in $\psi ( \pp V ) \subseteq \pp H^1 ( C, V )$.

We recall that a finite set of points $x_1 , \ldots x_r \in \cc^{N + 1}$ (resp., $\pp^N$) is said to be \textsl{in general position} if for $1 \le k \le r$, the span of any $k$ of the $x_i$ has dimension $\min \{ k , N + 1 \}$ (resp., $\min \{ k - 1, N \}$).

In what follows, $Y$ will denote a nonempty closed subfibration of $\pp V \to C$ which is Zariski locally trivial, and let $\tY \subset V$ be the relative cone over $Y$, which is a Zariski locally trivial closed subfibration of $V$ invariant under fiberwise scalar multiplication. 

\begin{definition} \label{genYvaluedppt} We shall say that a general $p \in \Prin (V)$ as in (\ref{GeneralPpt}) is a \textsl{general $\tY$-valued principal part} if the following conditions are satisfied.

\begin{itemize}
\item Each $\sigma_i$ is a section of the subfibration $\tY \subset V$ near $x_i$. (Note that this is stronger than the property that $\sigma_i (x_i) \in \tY|_{x_i}$ for each $i$.) 
\item If $h^1 ( C, V ) \ne 0$, then the classes $\left[ \frac{\sigma_i}{z_i} \right]$ are in general position in $H^1 ( C, V )$.
\end{itemize}
\end{definition}

\noindent Note that we do not directly define ``$\tY$-valued principal parts'', but only ``general $\tY$-valued principal parts''. 

In the case of interest to us, $V = L^{-1} \otimes \Sym^2 F$ and $Y$ is the relative Segre embedding $\pp F \hookrightarrow \pp \Sym^2 F$. However, the proofs in this more general setting are identical and cover other interesting situations, and are in fact notationally less cumbersome.

\subsubsection{Deforming to general principal parts}

\begin{lemma} \label{FirstGoodDef} Let $p \in \Prin (V)$ be a principal part which can be represented by a sum $\sum_{i=1}^m \frac{\sigma_i}{z^{d_i}}$, where each $z_i$ is a local parameter at a point $x_i \in C$ and $\sigma_i$ is a section of the subfibration $\tY \subseteq V$ which is nonzero at $x_i$. Then there exists an analytic family of principal parts $\{ p_s : s \in \Delta \} \subset \cT_d (V)$, where $d = \sum_{i=1}^m d_i$, parameterized by an open disk $\Delta$ around $0 \in \cc$, such that $p_0 = p$ and $p_s$ is a general $\tY$-valued principal part in the sense of Definition \ref{genYvaluedppt} for $s \ne 0$. \end{lemma}

\noindent Note that the $x_i$ in the above statement need not be distinct.

\begin{proof} We follow the approach of \cite[{\S} 2]{CH2}. Choose $d = \sum_{i=1}^m d_i$ distinct complex numbers
\[ \tau_{i,j} : \quad 1 \le i \le m, \quad 1 \le j \le d_i . \]
Let $\Delta$ be a small disk around $0 \in \cc$. For each $s \in \Delta$, let $p_s$ be the principal part
\[ \sum_{i=1}^m \frac{\sigma_i}{(z_i - s \tau_{i,1}) \cdots (z_i - s \tau_{i, d_i})} . \]
Clearly $p_0 = p$, while for $s \ne 0$ the support of $p_s$ consists of $\sum_{i=1}^m d_i = d$ distinct points. 
 If $h^1 ( C, V ) = 0$ then we are done.

Otherwise; using partial fraction decomposition, we see that for $s \ne 0$ we have
\[ p_s \ = \ \sum_{i, j} \frac{\rho_{i, j}}{s} \frac{\sigma_i}{(z_i - s \tau_{i, j})} \]
for nonzero scalars $\rho_{i,j}$ (note that this sum has a removable discontinuity at $s = 0$). Hence, to complete the proof we must show that for $s \ne 0$, the cohomology classes
\begin{equation} \left[ \frac{\sigma_i}{z_i - s \tau_{i, j}} \right] : \quad 1 \le i \le m, \quad 1 \le j \le d_i \label{DeformedClasses} \end{equation}
are in general position, possibly after shrinking $\Delta$.

By hypothesis, the representative $\sigma_i$ is a section of the subfibration $\tY \subseteq V$ over some Zariski neighborhood $U_i$ of $x_i$. Thus it defines a quasiprojective algebraic curve $\sigma_i ( U_i )$ in the total space of $\tY$, and also in $Y \subseteq \pp V$. Consider the image $C_i \subseteq \pp H^1 (C, V)$ of this curve via the map $\psi \colon \pp V \dashrightarrow \pp H^1 (C, V)$. We claim that the representative $\sigma_i$ may be changed if necessary such that $C_i$ is nondegenerate in $\pp H^1 ( C, V )$.

To see this; firstly, recall that the principal part $p$ is determined by the values $\sigma_i \mod m_{x_i}^{d_i} \cdot V_{x_i}$. Shrinking $U_i$, we may assume that $\tY|_{U_i}$ is trivial. Since $U_i$ and the fiber of $\tY$ may be assumed to be affine, and since by hypothesis $Y$ is nondegenerate in $\pp H^1 ( C, V)$, we can if necessary replace $\sigma_i$ with another section $\sigma_i'$ of $\tY|_{U_i}$ with $\sigma_i' \equiv \sigma_i \mod m_{x_i} \cdot V_{x_i}$ such that the image $C_i' \subset \pp H^1 ( C, V )$ of $\sigma_i' (U_i)$ is nondegenerate.

Since $C_i$ is nondegenerate, clearly so is the image in $C_i$ of any open analytic neighborhood of $x_i$ in $U_i$. By construction, the classes (\ref{DeformedClasses}) lie inside a union of such nondegenerate analytic curves in $\pp H^1 ( C , V )$. Thus, 
 shrinking $\Delta$ if necessary, we can assume that these classes are in general position for $s \ne 0$. \end{proof}

\subsubsection{Different representatives for a fixed cohomology class}

Let $p = \sum_{i=1}^m \frac{\sigma_i}{z_i^{d_i}}$ be as in the previous subsubsection, and consider again the deformation $\{ p_s \}$ constructed in Lemma \ref{FirstGoodDef}. We shall now show that if we add more points of $\pp Y$, we can construct a further deformation $\{ p_s' : s \in \Delta \}$ of $p$ whose generic element is a general $Y$-valued principal part satisfying in addition $[ p_s' ] \equiv [ p ] \in H^1 ( C, V )$.

For any $r \ge 1$, choose nonzero $y_1 , \ldots , y_r \in \tY$ lying over distinct points $u_1 , \ldots , u_r$ of $C$ respectively. For $1 \le k \le r$, let $\nu_k$ be a section of $\tY$ near $u_1$ such that $\nu_k ( u_k ) = y_k$. For each $k$, let $w_k$ be a local parameter at $u_k$. By Lemma \ref{alternativePsi}, if $h^1 ( C, V ) \ne 0$ then the cohomology class $\left[ \frac{\nu_k}{w_k} \right]$ defines the image of $y_k$ in $\psi ( Y ) \subseteq \psi ( \pp V ) \subseteq \pp H^1 (C, V)$.

With $p$ and $\{ p_s \}$ as above; 
 since $\psi( Y ) \subseteq \psi (\pp V)$ is nondegenerate in $\pp H^1 ( C, V )$, after perturbing the $y_k$ if necessary, by Lemma \ref{FirstGoodDef} we may assume that for each $s \ne 0$, the $d + r$ cohomology classes
\begin{equation} \left[ \frac{\sigma_i}{z_i - s \tau_{i, j}} \right] : 1 \le i \le m; \; 1 \le j \le d_i \quad \hbox{and} \quad \left[ \frac{\nu_k}{w_k} \right] : 1 \le k \le r \label{GenPosClasses} \end{equation}
are in general position.

We shall require the following easy lemma, whose proof is left to the reader.

\begin{lemma} \label{genpos} Let $H$ be a vector space. Suppose $t \ge \dim (H) + 1$, and let $v_1 , \ldots , v_t \in H$ be in general position. Then any element of $H$ can be written as a linear combination of $v_1 , \ldots , v_t$ in which every coefficient is nonzero. \qed \end{lemma}

\begin{lemma} \label{GoodDef} Assume $d + r > h^1 (C, V)$. Let $p$ and $\{ p_s \}$ and $\frac{\nu_1}{w_1} , \ldots , \frac{\nu_r}{w_r}$ be as above. Then there exist nowhere zero analytic functions $a_{i, j} (s)$ and $b_k (s)$ on $\Delta$ such that the family of principal parts
\[ p'_s \ := \ p_s + \sum_{i=1}^m \sum_{j=1}^{d_i} s \cdot a_{i, j}(s) \cdot \frac{\sigma_i}{z_i - s \tau_{i, j}} + \sum_{k=1}^r s \cdot b_k (s) \cdot \frac{\nu_k}{w_k} \]
satisfies $[ p'_s ] \equiv [ p ]$ for all $s \in \Delta$, and for $s \ne 0$, the principal part $p_s'$ is general $\tY$-valued in the sense of Definition \ref{genYvaluedppt}. \end{lemma}

\begin{proof} We define a map $\Phi \colon \Delta \times \cc^{d + r} \to \Delta \times H^1 ( C, V )$ of affine bundles over $\Delta$, by
\begin{multline*} \Phi \left( s, ( a_{i, j} : 1 \le i \le m ; 1 \le j \le d_i ), ( b_k : 1 \le k \le r ) \right) \ = \\ 
 \left( s, \left[ p_s \right] + \sum_{i, j} s \cdot a_{i, j} \left[ \frac{\sigma_i}{z - s \tau_{i, j}} \right] + \sum_{k = 1}^r s \cdot b_k \left[ \frac{\nu_k}{w_k} \right] \right) . \end{multline*}
For $s = 0$, this is the constant map $\cc^{d+r} \to H^1 ( C, V )$ with value $[ p_0 ] = [ p ]$. On the other hand, if $s \ne 0$ then $\Phi|_s$ is surjective, since the classes (\ref{GenPosClasses}) are nonzero and in general position. Therefore, since $d + r > h^1 ( C, V)$, by Lemma \ref{genpos}, we can choose nowhere zero analytic functions $a_{i, j}(s)$ and $b_k (s)$ such that
\[ \Phi \left( s, ( a_{i, j}(s) : 1 \le i \le m ; 1 \le j \le d_i ), ( b_k (s): 1 \le k \le r ) \right) \ \equiv \ [ p ] \]
for all $s \in \Delta$. Hence, defining the family of principal parts
\[ p_s' \ := \ p_s + \sum_{i=1}^m \sum_{j=1}^{d_i} s \cdot a_{i, j}(s) \cdot \frac{\sigma_i}{z_i - s \tau_{i, j}} + \sum_{k=1}^r s \cdot b_k (s) \cdot \frac{\nu_k}{w_k} , \]
the lemma follows. \end{proof}

\subsubsection{Proof of Proposition \ref{QFcDense}}

\begin{proof}[Proof of Proposition \ref{QFcDense}] Let $E$ be a point of $\qf \setminus \qfc$. We shall prove the proposition by showing that there exists a deformation $\cE_s$ of $E$ in $\qf$ parameterized by a neighborhood $\Delta$ of $0$ in $\cc$ satisfying $\cE_0 = E$ and $\cE_s \in \qfc$ for $s \ne 0$.

The saturation $\bE$ of $E$ is a Lagrangian subbundle of $W$ of degree $-\be \ge -e$. By Lemma \ref{GoodCoords}, we may assume that $W$ is an extension $0 \to F \to W_{p_0} \to F^* \otimes L \to 0$ as in (\ref{W}) for some principal part $p_0 \in \Prin (L^{-1} \otimes \Sym^2 F)$ such that $\bE = \Gamma_0 \cap W_{p_0} \cong \Ker ( p_0 )$. 
 By the proof of \cite[Lemma 2.7]{CH2} (essentially a diagonalization procedure 
 for symmetric matrices over $\Oc$), the principal part $p_0$ can be represented by
\[ \sum_{i = 1}^m \frac{\lambda_i \otimes \eta_i \otimes \eta_i}{z_i^{d_i}} \]
where $z_i$ is a local parameter at a point $x_i \in C$ and $\lambda_i$ a generator for $L^{-1}$ near $x_i$, and $\eta_i$ is a suitable frame element of $F$ near $x_i$; 
 moreover, if $x_{i_1} = \cdots = x_{i_h}$ then $\eta_{i_1} , \ldots , \eta_{i_h}$ are independent at $x_{i_1}$. Furthermore, as $\bE = \Ker ( p_0 )$, we have
\[ \sum_{i=1}^m d_i \ = \ \deg ( F^* \otimes L ) - \deg( \bE ) \ = \ f_0 + n \ell + \be . \]

We claim now that if $\be \ne e$, then we may assume that $E$ is general in $\Elm^{e - \be} ( \bE )$. For; the latter space is completely contained in $\qf$, since $\rank (\bE \cap F) = 0$. As $\Elm^{\be - e} ( \bE )$ is irreducible, if a general point belongs to the closure of $\qfc$ in $\qf$, then in fact every point does. Therefore, we may assume that
\[ \bE / E \ \cong \ \bigoplus_{k=1}^{e - \be} \cO_{u_k} , \]
for $e - \be$ distinct points $u_k \in C$ lying outside $\Supp ( p )$. Thus for $1 \le k \le e - \be$, there exists a local coordinate $w_k$ centered at $u_k$ and frame elements $\zeta_k$ and $\mu_k$ for $F$ and $L^{-1}$ respectively near $u_k$, such that $[E \to F^* \otimes L] \in \Elm^{f_0 + n\ell + e} ( F^* \otimes L )$ satisfies
\[ E \ = \ \Ker (p_0) \cap \Ker \left( \sum_{k=1}^{e - \be} \frac{\mu_k \otimes \zeta_k \otimes \zeta_k}{w_k} \right) \ = \ \Ker \left( p_0 + \sum_{k=1}^{e - \be} \frac{\mu_k \otimes \zeta_k \otimes \zeta_k}{w_k} \right) . \]
Here, as usual, we view the principal parts as $\Oc$-linear maps $F^* \otimes L \to \sPrin (F)$.

We now specialize the results of the previous subsections to the present situation. Set $F_1 = F$ and $F_2 = F^* \otimes L$, and $V = L^{-1} \otimes \Sym^2 F$. Let $Y$ be the relative Segre embedding $\pp F \hookrightarrow \pp \Sym^2 F$, and set $\sigma_i = \lambda_i \otimes \eta_i \otimes \eta_i$ and $\nu_k = \mu_k \otimes \zeta_k \otimes \zeta_k$. Also, $d = \be + f_0 + n\ell$ and $r = e - \be$, so $d + r = e + f_0 + n\ell$.

Continuing with this input, for each $s \in \Delta$, we set

\begin{multline*} p_s' \ := \ \sum_{i=1}^m \frac{\lambda_i \otimes \eta_i \otimes \eta_i}{(z_i - s \tau_{i,1}) \cdots (z_i - s \tau_{i, d_i})} + \sum_{i=1}^m \sum_{j=1}^{d_i} s \cdot a_{i, j}(s) \cdot \frac{\lambda_i \otimes \eta_i \otimes \eta_i}{z_i - s \tau_{i, j}} \\ + \sum_{k=1}^r s \cdot b_k (s) \cdot \frac{\mu_k \otimes \zeta_k \otimes \zeta_k}{w_k} , \end{multline*}
precisely as constructed in Lemma \ref{GoodDef}, and we consider the family $\{ \cE_s : s \in \Delta \}$ of elementary transformations of $F^* \otimes L$ given by $\cE_s = \Ker ( p'_s ) \subset F^* \otimes L$.

By construction of $\{ p_s' \}$, for $s \ne 0$ the sheaf $\cE_s$ has degree $-e$. 
 By Lemma \ref{GoodDef}, for $s \ne 0$ the principal part $p'_s$ is general $Y$-valued so, by Lemma \ref{vanishinglemma} the sheaf $\cE_s$ satisfies property (ii) of $\qfc$ in Definition \ref{DefnQFc}.

Furthermore, by hypothesis, $e \ge e_1 (W)$ where $e_1 (W)$ is as defined in (\ref{eoneW}). Therefore, by (\ref{eoneW}) and the discussion before it, we have
\[ d + r \ = \ e + f_0 + n\ell \ > \ h^1 ( C, L^{-1} \otimes \Sym^2 F ) . \]
Hence, by the statement of general position in Lemma \ref{GoodDef} the cohomology classes
\[ \left[ \frac{\lambda_i \otimes \eta_i \otimes \eta_i}{z_i - s \tau_{i, j}} \right] : 1 \le i \le m; 1 \le j \le d_i \quad \hbox{and} \quad \left[ \frac{\mu_k \otimes \zeta_k \otimes \zeta_k}{w_k} \right] : 1 \le k \le r \]
span $H^1 ( C, L^{-1} \otimes \Sym^2 F )$. Thus $\cE_s$ also has property (iii) of Definition \ref{DefnQFc} for $s \ne 0$.

Next, let $\cW \to \Delta \times C$ be the family of extensions associated to $\{ p'_s : s \in \Delta \}$ as discussed in {\S} \ref{FamiliesPptExt}. As each $p'_s$ is symmetric, as before the standard symplectic form (\ref{standardsympform}) restricts to a symplectic structure on each $\cW_{p'_s}$ by \cite[Criterion 2.1]{Hit1}.

Since by Lemma \ref{GoodDef} we have $[ p'_s ] \equiv [ p ] = \delta (W)$, in fact $\cW_s \cong W$ for all $s \in \Delta$. Hence by Lemma \ref{FamilyLifting}, the family
\[ \{ \cE_s = \Ker ( p'_s ) \subset \pi_C^* (F^* \otimes L) : s \in \Delta \} \]
in fact defines a family of saturated subsheaves of $W$. As $\cE_s \subset \sRat (F^* \otimes L)$ and the latter is isotropic with respect to (\ref{standardsympform}), in particular each $\cE_s$ is isotropic.

Now $\{ \cE_s \}$ is flat only over $\Delta \setminus \{ 0 \}$, as $\Ker ( p_s )$ has degree $-e$ for $s \ne 0$ but $\cE_0 = \bE$ has degree $-\be$. We replace the central member $\cE_0$ by the flat limit $\wcE_0$ of $\{ \cE_s : s \neq 0 \}$ in $\LQ$. Then $\wcE_0$ is a full rank subsheaf of $\bE = \Ker (p)$. We will have finished if we can show that $\wcE_0 = E$. Now for $s \ne 0$, each $\cE_s$ is contained in
\begin{equation} \Ker \left( \sum_{k=1}^{e - \be} s \cdot \frac{\mu_k \otimes \zeta_k \otimes \zeta_k}{w_k} \right) \ = \ \Ker \left( \sum_{k=1}^{e - \be} \frac{\mu_k \otimes \zeta_k \otimes \zeta_k}{w_k} \right) . \label{constantET} \end{equation}
Thus the limit $\wcE_0$ is also contained in (\ref{constantET}). As moreover $\wcE_0 \subseteq \bE \cong \Ker ( p_0' ) = \Ker ( p_0 )$, we have
\[ \wcE_0 \ \subseteq \ \Ker \left( \sum_{k=1}^{e - \be} \frac{\mu_k \otimes \zeta_k \otimes \zeta_k}{w_k} \right) \cap \Ker ( p_0' ) , \]
which is exactly $E$. As $\deg ( \wcE_0 ) = -e$ by flatness, we have $\wcE_0 = E$. \end{proof}

\begin{remark} \label{geominterp} The deformation $\{ p_s' \}$ is most naturally understood from the point of view of secant geometry. For simplicity, assume that $L = \Oc$ and $\psi \colon \pp F \dashrightarrow \pp H^1 (C, \Sym^2 F )$ is generically an embedding and that $E \subset F^*$ is a general elementary transformation corresponding to $e+f > h^1 ( C, \Sym^2 F )$ general points of $\pp F$. By \cite[Lemma 2.10 (i)]{CH2}, if $E$ is nonsaturated in $W$ then $\delta(W)$ lies on the secant spanned by $(\be+f) < (e+f)$ of these points. Moving inside the family $p'_s$ corresponds to perturbing the linear combination of the points defining $\delta(W)$ to be nonzero at all $e+f$ points, so as to obtain a principal part supported at exactly $e+f$ points, so defining a saturated isotropic subsheaf of degree $-e$. \end{remark}

\end{document}